\newtheorem{theorem}{Theorem}[section]
\newtheorem{lemma}[theorem]{Lemma}
\newtheorem{proposition}[theorem]{Proposition}
\newcommand{\tref}[1]{Theorem~\textup{\ref{thm:#1}}}
\newcommand{\pref}[1]{Proposition~\textup{\ref{prop:#1}}}
\newcommand{\cref}[1]{Corollary~\textup{\ref{cor:#1}}}
\newcommand{\lref}[1]{Lemma~\textup{\ref{lem:#1}}}
\newcommand{\Z}{\mathbb Z}
\newcommand{\gp}[2]{\langle\, #1 \,  | \, #2 \,\rangle}
\newcommand{\s}{\sigma}
\newcommand{\G}{\Gamma}
\newcommand{\comment}[1]{}
\newcommand{\eps}{\varepsilon}
\newcommand{\calP}{\mathcal P}
\newcommand{\calQ}{\mathcal Q}
\newcommand{\shp}{\!+\!}
\newcommand{\shm}{\!-\!}
\begin{document}

\title{Tight orientably-regular polytopes}

\author{Marston Conder \\
Department of Mathematics\\
University of Auckland\\
Auckland 1142, New Zealand\\
and \\
Gabe Cunningham\\
Department of Mathematics\\
University of Massachusetts Boston\\
Boston, Massachusetts 02125,  USA
}

\date{ \today }
\maketitle

\begin{abstract}

Every equivelar abstract polytope of type $\{p_1, \ldots, p_{n-1}\}$ has at least $2 p_1 \cdots p_{n-1}$ flags.
Polytopes that attain this lower bound are called \emph{tight}. Here we investigate the question of
under what conditions there is a tight orientably-regular polytope of type $\{p_1, \ldots, p_{n-1}\}$.
We show that it is necessary and sufficient that whenever $p_i$ is odd, both $p_{i-1}$ and $p_{i+1}$ are
even divisors of $2p_i$.

\vskip.1in
\medskip
\noindent
Key Words: abstract regular polytope, equivelar polytope, flat polytope, tight polytope

\medskip
\noindent
AMS Subject Classification (2000):  Primary: 51M20.  Secondary:  52B15, 05E18. 

\end{abstract}

\section{Introduction}

	Abstract polytopes are ranked partially-ordered sets that resemble the face-lattice of a convex polytope in several
	key ways. Many discrete geometric objects can be viewed as an abstract polytope by considering their face-lattices,
	but there are also many new kinds of structures that have no immediate geometric analogue. 

	A \emph{flag} of an abstract polytope is a chain in the poset that contains one element of each rank.
	In many ways, it is more natural to work with the flags of a polytope rather than the faces themselves.
	For example, every automorphism (order-preserving bijection) of a polytope is completely determined by
	its effect on any single flag.
	
	\emph{Regular} polytopes are those for which the automorphism group acts transitively on the set of flags.
	The automorphism group of a regular polytope is a quotient of some string Coxeter group $[p_1, \ldots, p_{n-1}]$,
	and conversely, every sufficiently nice quotient of a string Coxeter group appears as the automorphism group
	of a regular polytope. Indeed, it is actually possible to reconstruct a regular polytope from its
	automorphism group, so that much of the study of regular polytopes is purely group-theoretic.
	
	We say that a regular polytope $\calP$ is of \emph{type} (or has \emph{Schl\"afli symbol}) 
	$\{p_1, \ldots, p_{n-1}\}$ if $[p_1, \ldots, p_{n-1}]$ is the minimal string Coxeter group that covers 
	the automorphism group of $\calP$, 
	in a way that $p_1, \ldots, p_{n-1}$ are the orders of the relevant generators.  
	There is an equivalent formulation of this property that is entirely combinatorial, and hence it is possible
	to define a Schl\"afli symbol for many non-regular polytopes, including chiral polytopes (see \cite{chiral}) and
	other two-orbit polytopes (see \cite{two-orbit}). Any polytope with a well-defined Schl\"afli symbol
	is said to be \emph{equivelar}.
	
	In \cite{smallest-regular}, the first author determined the smallest regular polytope (by number of flags)
	in each rank. To begin with, he showed that every regular polytope of type $\{p_1, \ldots, p_{n-1}\}$
	has at least $2p_1 \cdots p_{n-1}$ flags. Polytopes that meet this lower bound are called \emph{tight}.
	He then exhibited a family of tight polytopes, one in each rank, of type $\{4, \ldots, 4\}$. 
	Using properties of the automorphism groups of regular polytopes, he showed that these were the smallest
	regular polytopes in rank $n \geq 9$, and that in every other rank, the minimum was also attained by
	a tight polytope (with type or dual type $\{3\}$, $\{3,4\}$, $\{4,3,4\}$, $\{3,6,3,4\}$, $\{4,3,6,3,4\}$, 
	$\{3,6,3,6,3,4\}$ or $\{4,3,6,3,6,3,4\}$, respectively).
	
	The second author showed in \cite{tight-polytopes} that the bound on the number of flags extended to
	any equivelar polytope, regardless of regularity. Accordingly, it makes sense to extend the definition of
	tight polytopes to include any polytope of type $\{p_1, \ldots, p_{n-1}\}$ with $2p_1 \cdots p_{n-1}$ flags.
	An alternate formulation was proved as well, showing that an equivelar polytope is tight if and only if
	every face is incident with all faces that are 2 ranks higher.
	
	Tightness is a restrictive property, and not every Schl\"afli symbol is possible for a tight polytope.
	In order for there to be a tight polytope of type $\{p_1, \ldots, p_{n-1}\}$, it is necessary that
	no two adjacent values $p_i$ and $p_{i+1}$ are odd. Theorem 5.1 in \cite{tight-polytopes} shows that
	this condition is sufficient in rank 3. In higher ranks, the question of sufficiency is still open.

	Constructing non-regular polytopes in high ranks is difficult. In order to determine which
	Schl\"afli symbols are possible for a tight polytope, it is helpful to begin by considering only regular polytopes.
	If every $p_i$ is even, then Theorem 5.3 in \cite{smallest-regular} and Theorem 6.3 in \cite{tight-polytopes}
	show that there is a tight regular polytope of type $\{p_1, \ldots, p_{n-1}\}$. 
	Also the computational data from \cite{conder-atlas, atlas} led the second author to 
	conjecture that if $p$ is odd and $q > 2p$, there is no tight regular polyhedron of type $\{p, q\}$. 
	Although we are currently unable to prove this conjecture,
	we can show that for tight \emph{orientably}-regular polyhedra, if $p$ is odd then 
	$q$ must divide $2p$. Moreover, we are able to prove the following generalisation in higher ranks:
	
	\begin{theorem}
	\label{thm:tight-existence2}
	There is a tight orientably-regular polytope of type $\{p_1, \ldots, p_{n-1}\}$ if and only if 
	the integers $p_{i-1}$ and $p_{i+1}$ are even divisors of $2p_i$ whenever $p_i$ is odd. 
	\end{theorem}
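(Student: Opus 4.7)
My plan is to prove both directions of the equivalence separately.

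Necessity. I would first reduce to the rank-$3$ case. A section of a tight polytope is again tight, and sections of orientably-regular polytopes inherit orientable regularity, so once $p_i$ is odd the $\{p_i, p_{i+1}\}$-section (respectively the $\{p_{i-1}, p_i\}$-section) of $\mathcal{P}$ is a tight orientably-regular polyhedron. Since the earlier result in \cite{tight-polytopes} already gives that $p_{i\pm 1}$ is even whenever $p_i$ is odd, it suffices to establish: if $\mathcal{Q}$ is a tight orientably-regular polyhedron of type $\{p,q\}$ with $p$ odd, then $q \mid 2p$.

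Inside the rotation group $\Gamma^+ = \langle \sigma_1,\sigma_2 \rangle$, which has order $pq$ by tightness, the subgroups $\langle\sigma_1\rangle$ and $\langle\sigma_2\rangle$ intersect trivially and every element has a unique expression $\sigma_1^a\sigma_2^b$. The Coxeter relation $(\sigma_1\sigma_2)^2 = 1$ is equivalent to $\sigma_2\sigma_1\sigma_2^{-1} = \sigma_1^{-1}\sigma_2^{-2}$, and raising this identity to the $p$th power (equivalently, conjugating $\sigma_1^p = 1$ by $\sigma_2$) gives $(\sigma_1^{-1}\sigma_2^{-2})^p = 1$. My plan is to show that $\sigma_1$ and $\sigma_2^2$ commute in $\Gamma^+$; the previous identity then simplifies to $\sigma_1^{-p}\sigma_2^{-2p} = \sigma_2^{-2p} = 1$, yielding $q \mid 2p$.

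The main obstacle is showing that $\sigma_2^2$ centralizes $\sigma_1$. I would argue by cases on the conjugacy class of $\sigma_1$ in $\Gamma^+$. If $\sigma_1$ is central, then $\Gamma^+$ is abelian, and the relation $(\sigma_1\sigma_2)^2 = 1$ forces $\sigma_1^2 = \sigma_2^{-2} \in \langle\sigma_1\rangle \cap \langle\sigma_2\rangle$, contradicting triviality of the intersection for $p$ odd. Otherwise $\sigma_1$ has at least two conjugates, namely $\sigma_1$ and $\sigma_1^{-1}\sigma_2^{-2}$; one then uses the outer involutory automorphism $\mu\colon \sigma_1 \mapsto \sigma_1^{-1},\ \sigma_2 \mapsto \sigma_2^{-1}$ of $\Gamma^+$ provided by orientability (conjugation by $\rho_1$) to rule out further conjugates and to place $\sigma_2^2$ in $Z_{\Gamma^+}(\sigma_1)$. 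This ``exactly two conjugates'' step, where orientable regularity enters in an essential way, is the technical heart of the proof.

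Sufficiency. When every $p_i$ is even, the existing constructions (Theorem 6.3 of \cite{tight-polytopes} and Theorem 5.3 of \cite{smallest-regular}) already produce a tight orientably-regular polytope. When some $p_i$ is odd, I would exhibit an explicit presentation: for each rank-$3$ section of type $\{p_i, p_{i+1}\}$ with $p_i$ odd and $p_{i+1} = 2m$, $m \mid p_i$, take the rotation subgroup to be the non-split extension of $\mathbb{Z}_2$ by the abelian group $\mathbb{Z}_{p_i} \oplus \mathbb{Z}_m$ suggested by the necessity analysis, and stitch the rank-$3$ blocks along common generators to form the full rotation group. I would then verify (a) that the full Coxeter-type quotient has order exactly $2p_1\cdots p_{n-1}$, (b) that the rotation subgroup has index $2$ (orientability), and (c) the intersection condition on parabolic subgroups required for an abstract polytope, with (c) expected to be the main technical hurdle, likely handled by induction on the rank.
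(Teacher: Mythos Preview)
Your reduction of necessity to rank $3$ matches the paper, and your observation that $[\sigma_1,\sigma_2^{\,2}]=1$ together with $(\sigma_1^{-1}\sigma_2^{-2})^p=1$ would force $q\mid 2p$ is correct. The gap is in your proposed proof that $\sigma_2^{\,2}$ centralises $\sigma_1$. The assertion that $\sigma_1$ has ``exactly two conjugates'' is not justified: the involution $\mu$ (conjugation by $\rho_1$) permutes conjugacy classes of $\Gamma^+$, sending that of $\sigma_1$ to that of $\sigma_1^{-1}$, but this says nothing about the \emph{size} of either class, and there is no evident mechanism by which $\mu$ ``rules out further conjugates''. Nothing in the bare data $\Gamma^+=\langle\sigma_1\rangle\langle\sigma_2\rangle$, $|\Gamma^+|=pq$, $(\sigma_1\sigma_2)^2=1$, together with the existence of $\mu$, forces $[\Gamma^+:C_{\Gamma^+}(\sigma_1)]\le 2$ without further input. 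The paper does not attempt a direct conjugacy-counting argument; instead it identifies $\Gamma^+/\mathrm{core}_{\Gamma^+}(\langle\sigma_2\rangle)$ with the orientation-preserving automorphism group of a reflexible regular Cayley map for the cyclic group of odd order $p$, and then invokes the classification of such maps (only the spherical equatorial map occurs) to conclude that the core equals $\langle\sigma_2^{\,2}\rangle$. A short separate lemma then upgrades normality of $\langle\sigma_2^{\,2}\rangle$ to centrality using $p$ odd. So the step you flag as ``the technical heart'' is handled in the paper by an external structural result, not by the conjugate-counting route you sketch; as written, your argument for this step does not go through.

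For sufficiency your outline is in the right spirit and broadly parallels the paper, but is considerably less specific. The paper writes down an explicit quotient $\Gamma(p_1,\dots,p_{n-1})$ of $[p_1,\dots,p_{n-1}]$ by one extra relator $r_i$ per rank-$3$ section (with three shapes depending on the parities of $p_i,p_{i+1}$), proves an upper bound $|\Gamma|\le 2p_1\cdots p_{n-1}$ via a chain of normal cyclic subgroups, and then obtains the matching lower bound and the intersection condition inductively using the flat amalgamation property (your ``stitching'') together with several rank-$4$ and rank-$5$ base cases that must be verified by hand (including a concrete permutation representation in the odd--even--odd case). Your proposal would need to supply these base cases and the FAP verification; the intersection condition in the mixed-parity base cases is where the real work lies, and your sketch does not yet indicate how you would handle it.
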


\section{Background}
	
	Our background information is mostly taken from \cite[Chs. 2, 3, 4]{arp}, with a few small additions.

	\subsection{Definition of a polytope}

		Let $\calP$ be a ranked partially-ordered set, the elements of which are called \emph{faces}, 
		and suppose that the faces of $\calP$ range in rank from $-1$ to $n$. 
		We call each face of rank $j$ a \emph{$j$-face}, 
		and we say that two faces are \emph{incident} if they are comparable.
		We also call the $0$-faces, $1$-faces and $(n-1)$-faces the 
		\emph{vertices}, \emph{edges} and \emph{facets} of $\calP$, respectively. 
		A \emph{flag} is a maximal chain in $\calP$. We say that two flags are \emph{adjacent} if they differ 
		in exactly one face, and that they are \emph{$j$-adjacent} if they differ only in their $j$-faces. 

		If $F$ and $G$ are faces of $\calP$
		such that $F \leq G$, then the \emph{section} $G / F$ consists of those faces $H$ such that
		$F \leq H \leq G$. If $F$ is a $j$-face and $G$ is a $k$-face, then we say that the \emph{rank}
		of the section $G / F$ is $k \shm j \shm 1$.
		If removing $G$ and $F$ from the Hasse diagram of $G / F$ leaves us with
		a connected graph, then we say that $G/F$ is \emph{connected}. That is, for
		any two faces $H$ and $H'$ in $G/F$ (other than $F$ and $G$ themselves), there is a sequence of faces
		\[ H = H_0, H_1, \ldots, H_k = H' \]
		such that $F < H_i < G$ for $0 \leq i \leq k$ and the faces $H_{i-1}$ and $H_i$ are 
		incident for $1 \leq i \leq k$.
		By convention, we also define all sections of rank at most $1$ to be connected.
		
		We say that $\calP$ is an (\emph{abstract}) \emph{polytope of rank $n$}, or equivalently, 
		an \emph{$n$-polytope}, if it satisfies the following four properties: \\[-18pt] 
		\begin{enumerate}
		\item There is a unique greatest face $F_n$ of rank $n$, 
		and a unique least face $F_{-1}$ of rank $-1$. \\[-18pt]
		\item Each flag has $n \shp 2$ faces. \\[-18pt]
		\item Every section is connected. \\[-18pt]
		\item Every section of rank $1$ is a diamond --- that is, 
		whenever $F$ is a $(j \shm 1)$-face and $G$ is a $(j \shp 1)$-face for some $j$, 
		with $F < G$, there are exactly two $j$-faces $H$ with $F < H < G$. 
		\end{enumerate}
		
		Condition (d) is known as the {\em diamond condition.}  
		Note that this condition ensures that for  $0 \le j \le n \shm 1$, every flag $\Phi$ has a unique $j$-adjacent 
		flag, which we denote by $\Phi^j$.
		
		In ranks $-1$, $0$, and $1$, there is a unique polytope up to isomorphism. Abstract polytopes of rank $2$
		are also called \emph{abstract polygons}, and for each $2 \leq p \leq \infty$, there is
		a unique abstract polygon with $p$ vertices and $p$ edges, denoted by $\{p\}$.
		
		If $F$ is a $j$-face and $G$ is a $k$-face of a polytope with $F \leq G$, then the section $G/F$ itself is a
		($k \shm j \shm 1$)-polytope. We may identify a face $F$ with the section $F/F_{-1}$, and call the section 
		$F_n/F$ the \emph{co-face at $F$}. The co-face
		at a vertex $F_0$ is also called a \emph{vertex-figure} at $F_0$. 

		If $\calP$ is an $n$-polytope, $F$ is an $(i \shm 2)$-face of $\calP$, and $G$ is an $(i \shp 1)$-face of
		$\calP$ with $F < G$, then the section $G/F$ is an abstract polygon. 
		If it happens that for $1 \leq i \leq n-1$, each such section is the same polygon $\{p_i\}$, 
		no matter which $(i \shm 2)$-face $F$ and $(i \shp 1)$-face $G$ we choose, 
		then we say that $\calP$ has \emph{Schl\"{a}fli symbol $\{p_1, \ldots, p_{n-1}\}$}, 
		or that $\calP$ is \emph{of type $\{p_1, \ldots, p_{n-1}\}$}.
		Also when this happens, we say that $\calP$ is \emph{equivelar}.
		
		All sections of an equivelar polytope are themselves equivelar polytopes. In particular,
		if $\calP$ is an equivelar polytope of type $\{p_1, \ldots, p_{n-1}\}$, then all its facets
		are equivelar polytopes of type $\{p_1, \ldots, p_{n-2}\}$, and all its vertex-figures
		are equivelar polytopes of type $\{p_2, \ldots, p_{n-1}\}$.
		
		Next, let $\calP$ and $\calQ$ be two polytopes of the same rank.
		A surjective function $\gamma: \calP \to \calQ$ is called a
		\emph{covering} if it preserves incidence of faces, ranks of faces, and adjacency of flags.
		If there exists a such covering $\gamma: \calP \to \calQ$, then we say that $\calP$ \emph{covers} $\calQ$.
		
		The \emph{dual} of a polytope $\calP$ is the polytope obtained by reversing the
		partial order. If $\calP$ is an equivelar polytope of type $\{p_1, \ldots, p_{n-1}\}$, then
		the dual of $\calP$ is an equivelar polytope of type $\{p_{n-1}, \ldots, p_1\}$.
		
	\subsection{Regularity}
	
		For polytopes $\calP$ and $\calQ$, an \emph{isomorphism} from $\calP$ to $\calQ$ is 
		an incidence- and rank-preserving bijection. 
		By connectedness and the diamond condition, 
		every polytope isomorphism is uniquely determined by its effect on a given flag. 
		An isomorphism from $\calP$ to itself is an \emph{automorphism} of $\calP$, and the group of 
		all automorphisms of $\calP$ is denoted by $\G(\calP)$. 
		We will denote the identity automorphism by $\varepsilon$. 
		
		We say that $\calP$ is \emph{regular} if the natural action of $\G(\calP)$ on the flags 
		of $\calP$ is transitive (and hence regular, in the sense of being sharply-transitive).
		For convex polytopes, this definition is equivalent to any of the usual definitions of regularity.
		
		Now let $\calP$ be any regular polytope, and choose a flag $\Phi$, which we call a \emph{base flag}. 
		Then the automorphism
		group $\G(\calP)$ is generated by the \emph{abstract reflections} $\rho_0, \ldots, \rho_{n-1}$,
		where $\rho_i$ maps $\Phi$ to the unique flag $\Phi^i$ that is $i$-adjacent to $\Phi$. 
		These generators satisfy $\rho_i^2 = \varepsilon$ for all $i$, and $(\rho_i \rho_j)^2 = \varepsilon$ for all $i$ 
		and $j$ such that $|i - j| \geq 2$.
		Every regular polytope is equivelar, and if its Schl\"{a}fli symbol is $\{p_1, \ldots, p_{n-1}\}$,
		then the order of each $\rho_{i-1} \rho_i$ is $p_i$.
		Note that if $\calP$ is a regular polytope of type $\{p_1, \ldots, p_{n-1}\}$, then $\G(\calP)$ is
		a quotient of the string Coxeter group $[p_1, \ldots, p_{n-1}]$, which is the abstract group
		generated by $n$ elements $x_0,\dots,x_{n-1}$ subject to the defining relations 
		$x_i^2 = 1$, $(x_{i-1} x_i)^{p_i} = 1$, and $(x_i x_j)^2 = 1$
		whenever $|i - j| \geq 2$.
		
		Next, if $\G$ is any group generated by elements $\rho_0, \ldots, \rho_{n-1}$, 
		we define $\G_I = \langle \rho_i \mid i \in I \rangle$ for each subset $I$ of 
		the index set $\{0, 1, \ldots, n-1\}$. If $\G$ is the automorphism group $\G(\calP)$ 
		of a regular polytope $\calP$, then these subgroups satisfy the following condition, 
		known as the {\em intersection condition\/}:
		\begin{equation}
		\label{eq:reg-int}
		\G_I \cap \G_J = \G_{I \cap J} 
		\;\; \textrm{ for all } I,J \subseteq \{0,1, \ldots, n-1\}.
		\end{equation}

		More generally, if $\G$ is any group generated by elements $\rho_0, \ldots, \rho_{n-1}$ 
		of order $2$ such that $(\rho_i \rho_j)^2 = 1$ whenever $|i - j| \geq 2$, then
		we say that $\G$ is a \emph{string group generated by involutions}, and abbreviate this 
		to say that $\G$ is an \emph{sggi}. 
		If the sggi $\G$ also satisfies the intersection condition (\ref{eq:reg-int}) given above, 
		then we call $\G$ a \emph{string C-group}. 
		
		There is a natural way of building a regular polytope $\calP(\G)$ from a string
		C-group $\G$ such that $\G(\calP(\G)) \cong \G$ and $\calP(\G(\calP)) \cong \calP$. 
		In particular, the $i$-faces of $\calP(\G)$ are taken to be the cosets of the subgroup 
		$\G_i = \langle \rho_j \mid j \neq i \rangle$, 
		with incidence of faces $\G_i \varphi $ and $\G_j \psi$ given by 
		\[\G_i \varphi \leq \G_j \psi \ \ \hbox{ if and only if } \  \ i \leq j \ \hbox{ and } \ \G_i \varphi \cap
		\G_j \psi \neq \emptyset.\] 
		This construction is also easily applied when $\G$ is any sggi (not necessarily a string C-group),
		but in that case, the resulting poset is not always a polytope.

		The following theory from \cite{arp} helps us determine when a given sggi is a string C-group:
		
		\begin{proposition}
		\label{prop:quo-crit}
		Let $\G = \langle \rho_0, \ldots, \rho_{n-1} \rangle$ be an sggi, and $\Lambda = \langle \lambda_0, \ldots, 
		\lambda_{n-1} \rangle$ a string C-group. If there is a homomorphism
		$\pi: \G \to \Lambda$ sending each $\s_i$ to $\lambda_i$, and if $\pi$ is one-to-one on the subgroup
		$\langle \rho_0, \ldots, \rho_{n-2} \rangle$ or the subgroup $\langle \rho_1, \ldots, \rho_{n-1} \rangle$, 
		then $\G$ is a string C-group.
		\end{proposition}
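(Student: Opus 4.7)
The plan is to proceed by induction on the rank $n$. In ranks $n \le 2$ every sggi is automatically a string C-group, since the intersection condition has no nontrivial instances to check; this handles the base. For the inductive step, I would assume without loss of generality that $\pi$ is injective on the facet subgroup $H = \langle \rho_0, \ldots, \rho_{n-2}\rangle$, the other case being symmetric upon reversing the generator order.

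First I would argue that $H$ is itself a string C-group. Since $\pi|_H$ is an injective homomorphism onto $\langle \lambda_0, \ldots, \lambda_{n-2}\rangle$, and the latter inherits the intersection condition from $\Lambda$ (a standard fact: the subgroup generated by any initial segment of the distinguished generators of a string C-group is again a string C-group, corresponding to the facet), the isomorphism transfers the intersection condition back to $H$. Next I would apply the inductive hypothesis to the vertex-figure subgroup $K = \langle \rho_1, \ldots, \rho_{n-1}\rangle$ together with the restriction $\pi|_K \colon K \to \langle \lambda_1, \ldots, \lambda_{n-1}\rangle$; the target is a string C-group by the same fact, and $\pi|_K$ is injective on $\langle \rho_1, \ldots, \rho_{n-2}\rangle$ because this subgroup is contained in $H$, where $\pi$ is already known to be injective. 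Induction therefore yields that $K$ is a string C-group.

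The third step is to verify the pivotal intersection $H \cap K = \langle \rho_1, \ldots, \rho_{n-2}\rangle$. The inclusion $\supseteq$ is immediate. For $\subseteq$, given $g \in H \cap K$, the image $\pi(g)$ lies in $\langle \lambda_0, \ldots, \lambda_{n-2}\rangle \cap \langle \lambda_1, \ldots, \lambda_{n-1}\rangle$, and since $\Lambda$ is a string C-group this intersection equals $\langle \lambda_1, \ldots, \lambda_{n-2}\rangle$. Hence $\pi(g) = \pi(h)$ for some $h \in \langle \rho_1, \ldots, \rho_{n-2}\rangle \subseteq H$; as $\pi$ is injective on $H$, this forces $g = h$, so $g \in \langle \rho_1, \ldots, \rho_{n-2}\rangle$ as required.

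Finally I would invoke the standard reduction criterion for string C-groups (essentially Proposition~2E16 of \cite{arp}): an sggi is a string C-group whenever its two distinguished subgroups $\langle \rho_0, \ldots, \rho_{n-2}\rangle$ and $\langle \rho_1, \ldots, \rho_{n-1}\rangle$ are both string C-groups and their intersection equals $\langle \rho_1, \ldots, \rho_{n-2}\rangle$. The main obstacle is really this reduction lemma, which is a classical but genuinely nontrivial fact in the theory; everything preceding it amounts to bookkeeping, transporting the intersection condition from $\Lambda$ back to $\G$ by exploiting the local injectivity of $\pi$ together with induction on rank.
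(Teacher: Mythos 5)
The paper does not actually prove this proposition; it is imported from \cite{arp} (Theorem 2E17 there), and your argument is essentially the standard proof from that source: induct on the rank, use injectivity of $\pi$ on one distinguished subgroup to transfer the C-group property of $\langle\lambda_0,\ldots,\lambda_{n-2}\rangle$ back to $H$, apply the inductive hypothesis to the vertex-figure subgroup $K$, pull the intersection $H\cap K=\langle\rho_1,\ldots,\rho_{n-2}\rangle$ back from $\Lambda$ via local injectivity, and finish with the facet/vertex-figure criterion (\pref{facet-vfig} here, 2E16 in \cite{arp}). The argument is correct; the one slip is your base case, since a rank-$2$ sggi is \emph{not} automatically a string C-group (the condition $\langle\rho_0\rangle\cap\langle\rho_1\rangle=1$ fails precisely when $\rho_0=\rho_1$), but this is harmless: under your hypotheses $\pi(\rho_0)=\lambda_0\neq\lambda_1=\pi(\rho_1)$ because $\Lambda$ is a string C-group, so $\rho_0\neq\rho_1$ and the case is immediate --- or simply start the induction at $n=1$, since your inductive step already handles $n=2$ verbatim.
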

		
		\begin{proposition}
		\label{prop:facet-vfig}
		Let $\G = \langle \rho_0, \ldots, \rho_{n-1} \rangle$ be an sggi. If $\langle \rho_0, \ldots, \rho_{n-2} \rangle$
		and $\langle \rho_1, \ldots, \rho_{n-1} \rangle$ are string C-groups, and $\langle \rho_0, \ldots, \rho_{n-2} \rangle
		\cap \langle \rho_1, \ldots, \rho_{n-1} \rangle \subseteq \langle \rho_1, \ldots, \rho_{n-2} \rangle$, then
		$\G$ is a string C-group.
		\end{proposition}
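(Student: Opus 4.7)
Set $A = \langle \rho_0, \ldots, \rho_{n-2}\rangle$, $B = \langle \rho_1, \ldots, \rho_{n-1}\rangle$, and $M = \langle \rho_1, \ldots, \rho_{n-2}\rangle$. Since $M \subseteq A \cap B$ is automatic, the hypothesis upgrades to the equality $A \cap B = M$. My goal is to verify the intersection condition $\G_I \cap \G_J = \G_{I \cap J}$ for every pair $I, J \subseteq \{0, \ldots, n-1\}$; the inclusion $\supseteq$ is trivial, so for the other I plan a case analysis on where the ``end'' indices $0$ and $n-1$ lie, reducing each case to the intersection conditions already known to hold inside $A$, $B$, or $M$.

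The two easy cases are $I \cup J \subseteq \{0, \ldots, n-2\}$ and $I \cup J \subseteq \{1, \ldots, n-1\}$, handled immediately by $A$ and $B$ being string C-groups. The next case is ``disjoint corners'', where after swapping $I$ and $J$ if necessary we have $0 \in I \setminus J$ and $n-1 \in J \setminus I$: here $\G_I \subseteq A$ and $\G_J \subseteq B$, so any $g \in \G_I \cap \G_J$ lies in $A \cap B = M$. Then $A$'s intersection condition applied inside $A$ gives $g \in \G_I \cap M = \G_{I \setminus \{0\}}$, $B$'s gives $g \in \G_J \cap M = \G_{J \setminus \{n-1\}}$, and a final application of $A$'s intersection condition inside $M$ yields $g \in \G_{I \cap J}$.

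The remaining cases are those in which at least one of $I, J$ contains both $0$ and $n-1$. My strategy here is to establish an auxiliary key lemma: for every $I \subseteq \{0, \ldots, n-1\}$, $\G_I \cap A = \G_{I \cap \{0, \ldots, n-2\}}$, and symmetrically $\G_I \cap B = \G_{I \cap \{1, \ldots, n-1\}}$. Granting this, whenever $\G_J$ happens to sit inside $A$ or $B$, one application of the lemma to $\G_I$ followed by $A$'s or $B$'s intersection condition finishes the case. The genuine leftover is when both $I, J \supseteq \{0, n-1\}$; here one exploits the commutation of $\rho_0$ (respectively $\rho_{n-1}$) with $\rho_2, \ldots, \rho_{n-1}$ (respectively $\rho_0, \ldots, \rho_{n-3}$) to split them off as direct-product factors of $\G_I$ and $\G_J$ when their neighbours $\rho_1, \rho_{n-2}$ are absent, reducing to an intersection inside $M$.

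The main obstacle is proving the key lemma. When $n-1 \in I$ but $n-2 \notin I$, the generator $\rho_{n-1}$ commutes with every other generator of $\G_I$, and $\rho_{n-1} \notin A$ (for otherwise $\rho_{n-1} \in A \cap B = M$, contradicting $\langle \rho_{n-1}\rangle \cap M = \{\varepsilon\}$, which is $B$'s intersection condition applied to $\{n-1\} \cap \{1, \ldots, n-2\} = \emptyset$); hence $\G_I = \G_{I \setminus \{n-1\}} \times \langle \rho_{n-1}\rangle$ and the lemma is immediate. An analogous dihedral decomposition handles $\{n-2, n-1\} \subseteq I$ with $n-3 \notin I$, using $\langle \rho_{n-2}, \rho_{n-1}\rangle \cap A = \langle \rho_{n-2}\rangle$, a consequence of $A \cap B = M$ together with $B$'s intersection condition. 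The fully general case, in which $I$ contains a long tail $\{k, \ldots, n-1\}$ of end-indices and no direct-product factorisation is available, is the principal difficulty, and I expect to handle it by induction on the rank $n$: the inductive hypothesis of the whole proposition, applied to an appropriately chosen sub-sggi, upgrades it to a string C-group and thereby allows the intersection $\G_I \cap A$ to be computed within a smaller string C-group where the desired equality is already known.
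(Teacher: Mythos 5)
This proposition is one the paper quotes from \cite{arp} (it is Proposition 2E16(a) there) and does not prove, so your attempt can only be compared with the standard argument in that reference. Your case taxonomy is sensible and the cases you actually complete are correct: the two ``easy'' cases, and the disjoint-corners case via $A \cap B = M$ followed by the intersection conditions of $A$ and $B$. But the proof has a genuine gap exactly where the content of the proposition lies. Your key lemma $\G_I \cap A = \G_{I \cap \{0,\dots,n-2\}}$ is only established when a direct-product factor can be split off because a neighbouring generator is missing from $I$; for the general case you say you ``expect to handle it by induction on the rank $n$'' applied to ``an appropriately chosen sub-sggi,'' but no such sub-sggi is exhibited, and it is unclear what it could be: for $I$ containing a tail $\{k,\dots,n-1\}$ together with $0$, the group $\G_I$ is not generated by a contiguous string of the $\rho_i$, its ``facet'' and ``vertex-figure'' subgroups are not among the groups hypothesised to be string C-groups, and so the inductive hypothesis of the proposition does not apply to it. The same problem recurs in your final case: when both $I$ and $J$ contain $0$, $1$, $n-2$ and $n-1$ (for instance $I = \{0,\dots,n-1\}\setminus\{k\}$ and $J = \{0,\dots,n-1\}\setminus\{l\}$ with $k \neq l$ interior), no generator commutes its way out, your splitting argument is unavailable, and nothing is offered in its place. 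As written this is an outline with the hardest step deferred, not a proof.

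For contrast, the argument in \cite{arp} does not attack all pairs $I,J$ directly. It first proves a reduction lemma (by induction on $n$): an sggi $\G$ whose subgroup $A = \langle \rho_0,\dots,\rho_{n-2}\rangle$ is a string C-group satisfies the full intersection condition provided only that $A \cap \langle \rho_j,\dots,\rho_{n-1}\rangle = \langle \rho_j,\dots,\rho_{n-2}\rangle$ for $1 \leq j \leq n-1$. Granting that, the proposition is immediate: for $j \geq 1$ the left-hand side is contained in $A \cap B = M$, hence equals $M \cap \langle \rho_j,\dots,\rho_{n-1}\rangle$, which is $\langle \rho_j,\dots,\rho_{n-2}\rangle$ by the intersection condition inside the string C-group $B$. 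If you want a self-contained proof, the effort should go into that reduction lemma rather than into an exhaustive case analysis over arbitrary $I$ and $J$.
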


		Given a regular $n$-polytope $\calP$ with automorphism 
		group $\G = \langle \rho_0, \ldots, \rho_{n-1} \rangle$, we define 
		the \emph{abstract rotations} $\s_1,\dots,\s_{n-1}$ 
		by setting $\s_i = \rho_{i-1} \rho_i$  for $1 \leq i \leq n-1$. 
		Then the subgroup $\langle \s_1, \ldots, \s_{n-1} \rangle$
		of $\G(\calP)$ is denoted by $\G^+(\calP)$, and called the \emph{rotation subgroup of $\calP$}.
		The index of $\G^+(\calP)$ in $\G(\calP)$ is at most $2$, and when the index is exactly $2$,
		then we say that $\calP$ is \emph{orientably-regular}. Otherwise, if $\G^+(\calP) = \G(\calP)$,
		then we say that $\calP$ is \emph{non-orientably-regular}. 
		(This notation comes from the study of regular maps.) 
		A regular polytope $\calP$ is orientably-regular if and only if $\G(\calP)$ has a presentation 
		in terms of the generators $\rho_0, \ldots, \rho_{n-1}$ such that all of the relators have even length. Note
		that every section of an orientably-regular polytope is itself orientably-regular.
		
	\subsection{Flat and tight polytopes}
	
		The theory of abstract polytopes accomodates certain degeneracies not present in the study of
		convex polytopes. For example, the face-poset of a convex polytope is a lattice 
		(which means that any two elements
		have a unique supremum and infimum), but this need not be the case with abstract polytopes.
		The simplest abstract polytope that is not a lattice is the digon $\{2\}$, in which 
		both edges are incident with both vertices. This type of degeneracy
		can be generalised as follows. If $\calP$ is an $n$-polytope, and 
		$0 \leq k < m \leq n-1$, then we say that $\calP$ is
		\emph{$(k, m)$-flat} if every one of its $k$-faces is incident with every one of its $m$-faces.
		If $\calP$ has rank $n$ and is $(0, n \shm 1)$-flat, then we also say simply that $\calP$ is a 
		\emph{flat polytope}. Note that if $\calP$ is $(k, m)$-flat,
		then $\calP$ must also be $(i, j)$-flat whenever $0 \leq i \leq k < m \leq j \leq n \shm 1$. 
		In particular, if $\calP$ is $(k, m)$-flat, then it is also flat. 

		We will also need the following, taken from \cite[Lemma 4E3]{arp}: 
		
		\begin{proposition}
		\label{prop:4e3}
		Let $\calP$ be an $n$-polytope, and let $0 \leq k < m < i \leq n \shm 1$. If each $i$-face of $\calP$ is 
		$(k,m)$-flat, then $\calP$ is also $(k, m)$-flat. Similarly, if $0 \leq i < k < m \leq n \shm 1$ and each 
		co-$i$-face of $\calP$ is $(k \shm i \shm 1, m \shm i \shm 1)$-flat, then $\calP$ is $(k, m)$-flat.
		\end{proposition}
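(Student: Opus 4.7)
My plan is to prove the first assertion directly and to obtain the second by duality. For the first, the strategy is to show that all $m$-faces of $\calP$ share the same set of $k$-subfaces, and that this common set must exhaust all $k$-faces of $\calP$; once this is done, every $k$-face lies below every $m$-face, which is exactly $(k,m)$-flatness.

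The argument rests on a local observation: whenever two $m$-faces $G_1$ and $G_2$ of $\calP$ both lie below a common $i$-face $H$, they have the same set of $k$-subfaces. Indeed, any $k$-subface of $G_j$ is automatically a $k$-subface of $H$, while the $(k,m)$-flatness of $H$ forces every $k$-subface of $H$ to be incident with both $G_1$ and $G_2$; so the $k$-subfaces of $G_1$, of $G_2$, and of $H$ all coincide. I would then propagate this rigidity globally using flag-connectedness. Given any two $m$-faces $G$ and $G'$, I take flags $\Phi$ and $\Phi'$ containing them and join $\Phi$ to $\Phi'$ by a sequence of adjacent flags; the $m$-face of a flag changes only at an $m$-adjacent step, and since $i \neq m$ the $i$-face is preserved across such a step, so the two $m$-faces before and after each $m$-adjacent step lie below a common $i$-face. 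Applying the local observation at each such step shows that $G$ and $G'$ share the same set of $k$-subfaces. Writing $S$ for this common set, any $k$-face $F$ extends to a flag whose $m$-face contains $F$ (using $m \leq n-1$), so $F \in S$; hence $S$ is the set of all $k$-faces of $\calP$, and the first assertion is proved.

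The second assertion follows by applying the first to the dual polytope $\calP^{*}$: an $i$-face of $\calP^{*}$ is a co-$(n-1-i)$-face of $\calP$, and $(k,m)$-flatness of $\calP$ corresponds to $(n-1-m,\,n-1-k)$-flatness of $\calP^{*}$, so the substitution $i \mapsto n-1-i$, $k \mapsto n-1-m$, $m \mapsto n-1-k$ converts the hypothesis of the second assertion into that of the first applied to $\calP^{*}$. The only delicate step of the plan is the flag-connectedness argument, which goes through cleanly precisely because the distinct indices $m$ and $i$ occupy independent positions of a flag and can be changed independently; everything else is bookkeeping with the definition of $(k,m)$-flatness and transitivity of the order.
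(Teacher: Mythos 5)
Your proof is correct. Note that the paper does not prove this proposition at all --- it is imported verbatim from McMullen and Schulte's \emph{Abstract Regular Polytopes} (Lemma 4E3) --- so there is no in-paper argument to compare against; your write-up is a valid self-contained substitute. The two ingredients both check out: the local observation that two $m$-faces under a common $(k,m)$-flat $i$-face have identical sets of $k$-subfaces, and the propagation of this along a chain of adjacent flags, where only $m$-adjacent steps change the $m$-face and these preserve the $i$-face since $i \neq m$. The only implicit appeal is to strong flag-connectedness, which the paper states only in the face-based form of axiom (c); the equivalence is standard, so this is not a gap. The dualisation for the second assertion, with $i \mapsto n-1-i$, $k \mapsto n-1-m$, $m \mapsto n-1-k$, is also handled correctly.
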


		It is easy to see that the converse is also true. 
		In other words, if $\calP$ is $(k, m)$-flat, then for $i > m$ each $i$-face of
		$\calP$ is $(k,m)$-flat, and for $j < k$ each co-$j$-face of $\calP$ is 
		$(k \shm j \shm 1, m \shm j \shm 1)$-flat.
		
		\smallskip
		Next, we consider tightness. 
		Every equivelar polytope $\calP$ of type $\{p_1, \ldots, p_{n-1}\}$
		has at least $2p_1 \cdots p_{n-1}$ flags, by \cite[Proposition 3.3]{tight-polytopes}. 
		Whenever $\calP$ has exactly this number of flags, we say that $\calP$ is \emph{tight}. 
		It is clear that $\calP$ is tight if and only if its dual is tight, 
		and that in a tight polytope, every section of rank $3$ or more is tight. 
		Also we will need the following, taken from \cite[Theorem 4.4]{tight-polytopes}:
		
		\begin{theorem}
		\label{thm:flat-is-tight}
		Let $n \geq 3$ and let $\calP$ be an equivelar $n$-polytope. Then $\calP$ is tight if and only if it is
		$(i,i+2)$-flat for $0 \leq i \leq n-3$.
		\end{theorem}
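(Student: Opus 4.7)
My plan is to prove both implications by induction on the rank $n$, using \pref{4e3} as the main structural tool. For the base case $n = 3$, I rely on a direct double count: each vertex of an equivelar polytope of type $\{p,q\}$ lies on $q$ edges, and (by the diamond condition) each such edge lies on $2$ faces, so the number of flags equals $2q f_0$, and by symmetry equals $2p f_2$; meanwhile the number of incident vertex-face pairs equals $q f_0 = p f_2$ and is at most $f_0 f_2$, with equality precisely when $\calP$ is $(0,2)$-flat. Tightness forces $f_0 = p$ and $f_2 = q$, which gives equality in the incidence count and hence $(0,2)$-flatness; the converse runs in reverse.

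For the forward direction at rank $n \geq 4$, I use the fact (quoted in the excerpt) that every section of rank at least $3$ of a tight polytope is tight. For each $i$ with $0 \leq i \leq n-4$, every $(i+3)$-face of $\calP$ is a tight polytope of rank $i+3$, so by the inductive hypothesis it is $(i, i+2)$-flat, and the first half of \pref{4e3} (applied with $i+3$ in the role of ``$i$'' there, and noting $i+2 < i+3$) then gives that $\calP$ itself is $(i, i+2)$-flat. The remaining case $i = n-3$ cannot be handled this way, since the only $n$-face is $F_n$ itself, so I instead pass to the vertex-figures: each one has rank $n-1$, is tight, and by induction is $(n-4, n-2)$-flat; the second (co-face) half of \pref{4e3} applied with $j = 0$ delivers the $(n-3, n-1)$-flatness of $\calP$.

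For the reverse direction, suppose $\calP$ is $(i, i+2)$-flat for all $0 \leq i \leq n-3$. Incidence within any facet or vertex-figure agrees with incidence in $\calP$, so each facet (and likewise each vertex-figure) inherits $(i, i+2)$-flatness in the appropriate range, and by the inductive hypothesis is tight of the appropriate Schl\"afli type. Partitioning flags of $\calP$ according to which facet they contain, the number of flags of $\calP$ equals $f_{n-1} \cdot 2 p_1 \cdots p_{n-2}$, so tightness reduces to showing $f_{n-1} = p_{n-1}$. For this, fix any $(n-3)$-face $F$; the co-face $F_n / F$ is a $p_{n-1}$-gon by equivelarity, and its edges are precisely the facets of $\calP$ incident with $F$. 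The $(n-3, n-1)$-flatness hypothesis says that \emph{every} facet of $\calP$ is incident with $F$, so $f_{n-1} = p_{n-1}$, and $\calP$ is tight.

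The only mildly delicate step is the boundary case $i = n-3$ in the forward direction, where the argument via $(i+3)$-faces degenerates; \pref{4e3} is stated in both a ``faces'' form and a ``co-faces'' form expressly to handle this switch, so the induction closes without further difficulty.
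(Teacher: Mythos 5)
This statement is one the paper imports without proof: it is quoted verbatim from Theorem~4.4 of the cited paper \emph{tight-polytopes}, so there is no internal argument to compare yours against. Judged on its own, your proof is correct and is assembled entirely from tools the present paper does make available: the remark that every section of rank at least $3$ of a tight polytope is tight, the two halves of \pref{4e3}, the fact that sections of equivelar polytopes are equivelar of the truncated type, and elementary flag counting. The rank-$3$ base case is a sound double count (flags $=2qf_0=2pf_2$, incident vertex--face pairs $=qf_0=pf_2\le f_0f_2$ with equality exactly at $(0,2)$-flatness). In the forward induction your case split is exactly right: for $i\le n-4$ the first half of \pref{4e3} applies to the tight $(i\shp 3)$-faces, and the boundary case $i=n-3$ genuinely needs the co-face half applied to the vertex-figures, which you invoke with the correct parameters (this is the one place a careless argument would break, and you handle it). In the reverse direction, the partition of flags by facet gives $f_{n-1}\cdot 2p_1\cdots p_{n-2}$ flags once the facets are tight by induction, and $(n\shm 3,n\shm 1)$-flatness forces $f_{n-1}=p_{n-1}$ because the rank-$2$ section $F_n/F$ over an $(n\shm 3)$-face is a $p_{n-1}$-gon. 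The only caveat worth recording is that the counting implicitly assumes all entries of the Schl\"afli symbol are finite, which is the standing assumption whenever tightness is discussed.
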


		Later in this paper we will build polytopes inductively, and for that, the following approach is useful.
		We say that the regular $n$-polytope $\calP$ has the \emph{flat amalgamation property} (or FAP) 
		with respect to its $k$-faces, if adding the relations $\rho_i = \eps$ for $i \geq k$ to $\G(\calP)$ 
		yields a presentation for $\langle \rho_0, \ldots, \rho_{k-1} \rangle$. 
		Similarly, we say that $\calP$ has the FAP with respect
		to its co-$k$-faces if adding the relations $\rho_i = \eps$ for $i \leq k$ yields a presentation
		for $\langle \rho_{k+1}, \ldots, \rho_{n-1} \rangle$. 
		
		We will also use the following, taken from \cite[Theorem 4F9]{arp}:
		
		\begin{theorem}
		\label{thm:fap}
		Suppose $m, n \geq 2$, and $0 \leq k \leq m-2$ where $k \geq m-n$. 
		Let $\calP_1$ be a regular $m$-polytope,
		and let $\calP_2$ be a regular $n$-polytope such that the co-k-faces of $\calP_1$ are isomorphic to the
		$(m \shm k \shm 1)$-faces of $\calP_2$. 
		Also suppose that $\calP_1$ has the FAP with respect to its co-$k$-faces, and that
		$\calP_2$ has the FAP with respect to its $(m \shm k \shm 1)$-faces. 
		Then there exists a regular $(k \shp n \shp 1)$-polytope
		$\calP$ such that $\calP$ is $(k,m)$-flat, and the $m$-faces of $\calP$ are isomorphic to $\calP_1$, while
		the co-$k$-faces of $\calP$ are isomorphic to $\calP_2$. Furthermore, $\calP$ has the FAP with respect
		to its $m$-faces and its co-$k$-faces.
		\end{theorem}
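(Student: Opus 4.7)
The plan is to prove both directions separately, with necessity following from a reduction to the rank-3 case and sufficiency proved by induction on $n$, amalgamating a tight orientably-regular facet with a tight orientably-regular vertex-figure via Theorem \ref{thm:fap}.

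For necessity, suppose $\calP$ is a tight orientably-regular polytope of type $\{p_1,\ldots,p_{n-1}\}$ and that $p_i$ is odd. Since $\calP$ is tight, no two consecutive entries of the Schl\"afli symbol can both be odd (as noted in the introduction), so $p_{i-1}$ and $p_{i+1}$ are both even. Consider the rank-3 section of $\calP$ of type $\{p_i,p_{i+1}\}$. It is itself tight (every rank-$\ge 3$ section of a tight polytope is tight) and orientably-regular (every section of an orientably-regular polytope is orientably-regular). The rank-3 result already established in the paper---``if $p$ is odd then $q$ divides $2p$''---then forces $p_{i+1}\mid 2p_i$. Applying the same argument to the rank-3 section of type $\{p_{i-1},p_i\}$, where the odd entry now sits in the second position, and invoking the dual form of the rank-3 result, gives $p_{i-1}\mid 2p_i$.

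For sufficiency, I would induct on $n$. The cases $n\le 2$ are trivial and $n=3$ is the rank-3 existence result proved earlier in the paper. For $n\ge 4$, both $\{p_1,\ldots,p_{n-2}\}$ and $\{p_2,\ldots,p_{n-1}\}$ inherit the divisibility and parity hypotheses, so by induction there exist tight orientably-regular polytopes $\calK$ and $\calL$ of these types. Choose $\calK$ and $\calL$ so that the vertex-figure of $\calK$ and the facet of $\calL$ agree as $(n-2)$-polytopes of type $\{p_2,\ldots,p_{n-2}\}$, and assume (see below) that $\calK$ has the FAP with respect to its co-$0$-faces and $\calL$ has the FAP with respect to its $(n-2)$-faces. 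Then Theorem \ref{thm:fap}, applied with $k=0$ and $m=n-1$, yields a regular $n$-polytope $\calP$ that is $(0,n-1)$-flat, with facets isomorphic to $\calK$ and vertex-figures isomorphic to $\calL$. Since $\calK$ is tight, Theorem \ref{thm:flat-is-tight} makes each facet of $\calP$ $(i,i+2)$-flat for $0\le i\le n-4$, and Proposition \ref{prop:4e3} lifts this to $(i,i+2)$-flatness of $\calP$ for the same range. The analogous argument for vertex-figures $\calL$ (viewed as co-$0$-faces) yields $(i,i+2)$-flatness of $\calP$ for $1\le i\le n-3$. Together these cover $0\le i\le n-3$, so $\calP$ is tight by Theorem \ref{thm:flat-is-tight}. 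Orientability of $\calP$ is inherited because a presentation for $\G(\calP)$ can be given by combining the even-length relators of $\G(\calK)$ and $\G(\calL)$ with the commutation relations $(\rho_0\rho_j)^2=\eps$ for $j\ge 2$, all of which have even length.

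The main obstacle is verifying the FAP at each step of the induction and making compatible choices of $\calK$ and $\calL$ so that the vertex-figure of $\calK$ really does coincide with the facet of $\calL$. My approach would be to strengthen the inductive hypothesis to assert the existence of a \emph{canonical} tight orientably-regular polytope for every valid Schl\"afli symbol, together with the FAP with respect to both facets and vertex-figures and with matching sections built into the construction. Establishing the FAP itself---checking that imposing $\rho_i=\eps$ for indices above or below a threshold genuinely collapses the group onto the expected subgroup---will likely require explicit presentations, in which the extra relations needed to force tightness beyond the Coxeter relations (especially near each odd $p_i$) must be both of even length, to preserve orientability, and mild enough not to damage the facet or vertex-figure subgroup. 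The rank-3 construction already in the paper and the all-even construction of \cite[Theorem 6.3]{tight-polytopes} should supply the building blocks and templates for these presentations.
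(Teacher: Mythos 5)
What you have written is not a proof of the statement in question. The statement to be proved is \tref{fap} itself --- the amalgamation theorem asserting that from a regular $m$-polytope $\calP_1$ and a regular $n$-polytope $\calP_2$ with matching sections and the stated flat amalgamation properties, one can construct a $(k,m)$-flat regular $(k \shp n \shp 1)$-polytope $\calP$ with prescribed $m$-faces and co-$k$-faces, itself again having the FAP. Your proposal instead sketches a proof of the paper's main classification result, \tref{tight-existence2}, and does so \emph{by invoking \tref{fap} as a black box} (``Then Theorem \ref{thm:fap}, applied with $k=0$ and $m=n-1$, yields a regular $n$-polytope $\calP$\dots''). An argument that cites the very statement it is supposed to establish cannot serve as a proof of that statement. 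As written, your text contains none of what the theorem actually requires: no construction of $\calP$ or of its automorphism group, no verification of the intersection condition for the amalgamated group, no proof of $(k,m)$-flatness, and no verification that the resulting polytope inherits the FAP with respect to its $m$-faces and co-$k$-faces.

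For the record, the paper does not prove \tref{fap} either; it is quoted from \cite[Theorem 4F9]{arp}. The proof there proceeds by forming the sggi on generators $\rho_0, \ldots, \rho_{k+n}$ subject to the union of the defining relations of $\G(\calP_1)$ and a shifted copy of those of $\G(\calP_2)$, using the two FAP hypotheses to show that $\langle \rho_0, \ldots, \rho_{m-1} \rangle$ and $\langle \rho_{k+1}, \ldots, \rho_{k+n} \rangle$ are faithful copies of $\G(\calP_1)$ and $\G(\calP_2)$, and then verifying the intersection condition, the flatness, and the FAP conclusions; that is the route a self-contained proof must take. (Separately, even read as a proof of \tref{tight-existence2}, your induction at $k=0$, $m=n-1$ glosses over exactly the difficulties the paper's Section 4 is devoted to: the FAP hypotheses of \tref{fap} need $p_2$, respectively $p_{n-2}$, to be even, which is why the paper splits at an index $i$ with $p_{i-1}$ and $p_{i+1}$ both even and must treat several low-rank mixed-parity cases by separate explicit constructions.)
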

		
\section{Tight orientably-regular polyhedra}

	We now consider the values of $p$ and $q$ for which there is a tight orientably-regular polyhedron
	of type $\{p, q\}$. 
	By \cite[Proposition 3.5]{tight-polytopes}, there are no tight polyhedra of type
	$\{p, q\}$ when $p$ and $q$ are both odd. 
	Also by \cite[Theorem 5.3]{smallest-regular} and \cite[Theorem 6.3]{tight-polytopes}, 
	if $p$ and $q$ are both even then there exists a tight orientably-regular polyhedron whose
	automorphism group is the quotient of the Coxeter group $[p, q]$ obtained by adding 
	the extra relation $(x_0 x_1 x_2 x_1)^2 = 1$.
	We note, however, that this might not the only tight orientably-regular polyhedron of the 
	given type; for example, there are two of type $\{4, 8\}$ that are non-isomorphic (see \cite{conder-atlas}).

	When $p$ is odd and $q$ is even (or vice-versa), the situation is more complicated. 
	Evidence from \cite{conder-atlas} and \cite{atlas} led the second author to conjecture
	that there are no tight regular polyhedra of type $\{p, q\}$ if $p$ is odd and $q > 2p$ (see \cite{tight-polytopes}).
	We will show that this is true in the orientably-regular case. In fact, we will
	prove something stronger, namely that $q$ must divide $2p$.
	
	We start by showing that if $p$ is odd and $q$ is an even divisor of $2p$, then there is a tight
	orientably-regular polyhedron of type $\{p, q\}$. 
	To do this, we define $\G(p, q)$ to be the group 
	$\gp{\rho_0, \rho_1, \rho_2}{\rho_0^{\ 2},\rho_1^{\ 2}, \rho_2^{\ 2}, 
	(\rho_0 \rho_1)^p, (\rho_1 \rho_2)^q, (\rho_0 \rho_2)^2,
	(\rho_0 \rho_1 \rho_2 \rho_1 \rho_2)^2}$,  
	which is obtainable by adding one extra relator to the Coxeter group $[p, q]$. 
	
	\begin{theorem}
	\label{thm:q-div-2p}
	Let $p \geq 3$ be odd, and let $q$ be an even divisor of $2p$. Then there is a tight orientably-regular
	polyhedron $\calP$ of type $\{p, q\}$ such that $\G(\calP) \cong \G(p, q)$.
	\end{theorem}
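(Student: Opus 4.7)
The plan has two phases: establishing $|\G(p, q)| \le 2pq$ from the defining relations, and then constructing a group of order exactly $2pq$ satisfying all the relations, from which the polytope and its required properties follow.

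For the upper bound, set $\tau = \rho_1\rho_2$. The extra relator equals $(\rho_0\tau^2)^2$, so $\rho_0$ inverts $\tau^2$; short direct computations from $\rho_j^2 = \eps$ show that $\rho_1$ and $\rho_2$ also invert $\tau^2$, making $N = \langle\tau^2\rangle$ normal in $\G(p, q)$. In $\G(p, q)/N$, the relation $(\bar\rho_1\bar\rho_2)^2 = \eps$ forces $\bar\rho_2$ to commute with both $\bar\rho_0$ and $\bar\rho_1$, so $\G(p, q)/N$ is a quotient of the string Coxeter group $[p, 2] \cong D_p \times C_2$ of order at most $4p$. Since $|N| \le q/2$, this gives $|\G(p, q)| \le 2pq$.

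The main technical step is constructing a matching model of order $2pq$. Put $k = q/2$; by hypothesis $k$ divides the odd number $p$, so $k$ is odd. Form the semidirect product $G^+ = (\Z_p \times \Z_k) \rtimes \Z_2$ in which the generator of $\Z_2$ acts by the involution $(a, b) \mapsto (-a, b - a)$, and place $\sigma = ((1, 0), 0)$ and $\tau = ((1, 1), 1)$ in $G^+$. Direct computation gives $\sigma^p = \tau^q = (\sigma\tau)^2 = \eps$, $\sigma\tau^2 = \tau^2\sigma$, and $|G^+| = pq$. The map $\psi(\sigma) = \sigma^{-1}$, $\psi(\tau) = \sigma^2\tau$ then extends to an involutive automorphism of $G^+$ (the key verification is $(\sigma^2\tau)^2 = \tau^{-2}$, which follows from $\sigma\tau^2 = \tau^2\sigma$ together with $\sigma\tau = \tau^{-1}\sigma^{-1}$). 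Form $G = G^+ \rtimes \langle\rho_0\rangle$ with $\rho_0$ acting as $\psi$, and set $\rho_1 = \rho_0\sigma$, $\rho_2 = \rho_0\sigma\tau$. Routine verifications confirm each $\rho_i$ is an involution, $\rho_0$ and $\rho_2$ commute (since $\psi$ fixes $\sigma\tau$), $\rho_0\rho_1 = \sigma$ and $\rho_1\rho_2 = \tau$ have the required orders $p$ and $q$, and the extra relation holds because $(\rho_0\tau^2)^2 = \psi(\tau^2)\cdot\tau^2 = \tau^{-2}\cdot\tau^2 = \eps$. Hence $\G(p, q)$ surjects onto $G$, and by the upper bound $\G(p, q) \cong G$ has order exactly $2pq$. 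The hardest part is discovering the \emph{shear} action: the nonzero second coordinate of $\tau$ is precisely what allows $\tau$ to attain its full order $q$ while remaining compatible with the triangle relation $(\sigma\tau)^2 = \eps$.

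Finally, the string C-group condition follows from \pref{facet-vfig}. In $G$, the parabolic subgroups $\langle\rho_0, \rho_1\rangle = \langle\sigma, \rho_0\rangle \cong D_p$ and $\langle\rho_1, \rho_2\rangle = \langle\tau, \rho_1\rangle \cong D_q$ are dihedral, and their intersection can be shown to equal $\langle\rho_1\rangle$ by a short case split using the parity map: any common element $g$ lying in $G^+$ belongs to $\langle\sigma\rangle \cap \langle\tau\rangle$, which is trivial because nontrivial powers of $\tau$ have nonzero $\Z_k$-coordinate while powers of $\sigma$ do not; and any $g$ outside $G^+$ has the form $\sigma^a\rho_0 = \tau^c\rho_1$, which forces $\tau^c = \sigma^{a+1}$, hence $c = 0$ and $g = \rho_1$. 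Thus $\calP(\G(p, q))$ is a regular polyhedron of type $\{p, q\}$ with $2pq$ flags, and so is tight. Every defining relator of $\G(p, q)$ has even length, so the parity homomorphism $\G(p, q) \to \Z_2$ is well-defined and nontrivial (its kernel is $G^+$), making $\calP(\G(p, q))$ orientably regular.
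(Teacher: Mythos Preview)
Your proof is correct, but it takes a different route from the paper's.  Both arguments begin identically, observing that the extra relator forces $\rho_0$ to invert $\omega = (\rho_1\rho_2)^2$, so that $N = \langle\omega\rangle$ is normal and $\G(p,q)/N$ is a quotient of $[p,2]$.  From here the paths diverge.  The paper argues that the quotient is \emph{exactly} $[p,2]$, which immediately gives $\rho_0\rho_1$ order $p$ and, via \pref{quo-crit}, the string C-group property (since the cover is one-to-one on $\langle\rho_0,\rho_1\rangle$); it then exhibits a separate epimorphism $\G(p,q)\to D_q$ (sending $\rho_0\mapsto (y_1y_2)^{p-1}y_1$, $\rho_1\mapsto y_1$, $\rho_2\mapsto y_2$) to show $\rho_1\rho_2$ has order $q$, and concludes $|\G(p,q)| = |N|\cdot|[p,2]| = (q/2)\cdot 4p = 2pq$.

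Your approach instead builds an explicit model $G = ((\Z_p\times\Z_k)\rtimes\Z_2)\rtimes\langle\rho_0\rangle$ of order $2pq$ realising all the relations, and then verifies the intersection condition by hand via \pref{facet-vfig}.  This is heavier computationally (the shear action, the automorphism $\psi$, and the case split on the parity map all require care), but it has the merit of producing a completely concrete description of $\G^+(\calP)$ as a metabelian group; the paper's two homomorphic images $[p,2]$ and $D_q$ pin down the order and the Schl\"afli symbol without ever writing the group down explicitly.  One small imprecision: your claim that ``nontrivial powers of $\tau$ have nonzero $\Z_k$-coordinate'' is not literally true for odd powers (e.g.\ $\tau^{2k-1}$ has $\Z_k$-coordinate $0$), but those powers lie outside the kernel of the $\Z_2$-projection anyway, so the conclusion $\langle\sigma\rangle\cap\langle\tau\rangle = \{1\}$ still holds.
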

	
	\begin{proof}
	Let $\G(p, q) = \langle \rho_0, \rho_1, \rho_2 \rangle$. In light of the construction in
	Section 2.2, all we need to do is show that $\G(p, q)$ is a string C-group of order $2pq$, 
	in which the order of $\rho_0 \rho_1$ is $p$ and the order of $\rho_1 \rho_2$ is $q$.
	
	First, note that the element $\omega = (\rho_1 \rho_2)^2$ generates a cyclic normal subgroup $N$ 
	of $\G(p, q)$, since each of $\rho_1$ and $\rho_2$ conjugates $\omega$ to its inverse, 
	and the extra relation $(\rho_0 \rho_1 \rho_2 \rho_1 \rho_2)^2 = 1$ implies that $\rho_0$ 
	does the same. 
	Factoring out $N$ gives quotient $\G(p, 2)$, in which the extra 
	relation $(\rho_0 \rho_1 \rho_2 \rho_1 \rho_2)^2 = 1$ is redundant. 
	In fact $\G(p, 2)$ is isomorphic to the string Coxeter group $[p, 2]$, 
	which is an extension of the dihedral group of order $2p$, and has order $4p$. 
         
         In particular, $\G(p, q)$ covers $\G(p, 2) \cong [p,2]$, and it follows that 
         $\rho_0 \rho_1$ has order $p$ (rather than some proper divisor of $p$). 
	Also the cover from $\G(p, q)$ to $\G(p, 2)$ is one-to-one on $\langle
	\rho_0, \rho_1 \rangle$, and so by \pref{quo-crit}, we find that $\G(p, q)$ is a string C-group.

	Next, we observe that the dihedral 
	group $D_q = \gp{y_1, y_2}{y_1^{\ 2}, y_2^{\ 2}, (y_1 y_2)^q}$ 
	is a quotient of $\G(p, q)$, via an epimorphism taking $\rho_1 \mapsto y_1$, 
	$\,\rho_2 \mapsto y_2\,$ and $\,\rho_0 \mapsto (y_1 y_2)^{p-1} y_1$.  
	(Note that the defining relations for $\G(p, q)$ are satisfied by their images 
	in $D_q$, since $(y_1 y_2)^{p-1} y_1$ has order $2$, 
	the order of $(y_1 y_2)^{p-1}$ divides $p$ (as $p \shm 1$ is even and $q$ divides $2p$), 
	the order of $(y_1 y_2)^{p-1} y_1 y_2 = (y_1 y_2)^{p}$ divides $2$ (as $q$ divides $2p$), 
	and $(y_1 y_2)^{p-1} y_2 y_1 y_2$ has order $2$.) 
	In particular, the image of $\rho_1 \rho_2$ is $y_1 y_2$, which has order $q$, 
	and hence $\rho_1 \rho_2$ has order $q$. 
	
	Finally, $|\G(p, q)| = |\G(p, q)/N||N| = |\G(p, 2)||N| = 4p(q/2) = 2pq$, 
	since $N = \langle (\rho_1 \rho_2)^2 \rangle$ has order $q/2$. 
	\end{proof}

	We will show that in fact, the only tight orientably-regular polyhedra of type $\{p, q\}$ with $p$
	odd are those given in \tref{q-div-2p}. 
	We proceed with the help of a simple lemma.
	
	\begin{lemma}
	\label{lem:s2-normal}
	Let $\calP$ be an orientably-regular polyhedron of type $\{p, q\}$, with $p$ odd, and with 
	automorphism group $\G(\calP)$ generated by the reflections $\rho_0, \rho_1, \rho_2$. 
	If $\omega = (\rho_1 \rho_2)^2 = \s_2^{\ 2}$ generates a normal subgroup of $\G^+(\calP)$, 
	then $\omega$ is central, and $q$ divides $2p$.
	\end{lemma}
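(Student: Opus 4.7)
The plan is to prove the two conclusions in order: first the centrality of $\omega$ in $\G^+(\calP)$, and then the divisibility $q \mid 2p$ via a short induction that uses the relation $(\sigma_1 \sigma_2)^2 = \varepsilon$. Since $\omega = \sigma_2^2$ commutes with $\sigma_2$, centrality in $\G^+(\calP) = \langle \sigma_1, \sigma_2 \rangle$ reduces to showing that $\sigma_1$ also centralises $\omega$. Normality gives $\sigma_1 \omega \sigma_1^{-1} = \omega^j$ for some integer $j$, and the relation $\sigma_1^p = \varepsilon$ immediately yields $j^p \equiv 1 \pmod{|\omega|}$.

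To get a second congruence on $j$, I would step briefly outside $\G^+(\calP)$ and use the reflection $\rho_0$. A short computation gives $\rho_1 \omega \rho_1 = (\rho_2 \rho_1)^2 = \omega^{-1}$, and the factorisation $\rho_0 = \sigma_1 \rho_1$ (which comes from $\sigma_1 = \rho_0 \rho_1$ and $\rho_1^2 = \varepsilon$) then produces
\[
\rho_0 \, \omega \, \rho_0 \;=\; \sigma_1 (\rho_1 \omega \rho_1) \sigma_1^{-1} \;=\; \sigma_1 \omega^{-1} \sigma_1^{-1} \;=\; \omega^{-j}.
\]
Applying $\rho_0$ twice and using $\rho_0^2 = \varepsilon$ then forces $\omega = \omega^{j^2}$, so $j^2 \equiv 1 \pmod{|\omega|}$. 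Because $\gcd(p, 2) = 1$, the two congruences $j^p \equiv 1$ and $j^2 \equiv 1$ combine to force $j \equiv 1 \pmod{|\omega|}$, so $\sigma_1$ commutes with $\omega$ and $\omega$ lies in the centre of $\G^+(\calP)$.

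For the divisibility, the relation $(\sigma_1 \sigma_2)^2 = \varepsilon$ rearranges to $\sigma_1 \sigma_2 \sigma_1 = \sigma_2^{-1}$, from which a one-line manipulation yields $\sigma_2 \sigma_1 \sigma_2^{-1} = \sigma_1^{-1} \omega^{-1}$. With $\omega$ now central, an easy induction on $k$ shows $\sigma_2 \sigma_1^k \sigma_2^{-1} = \sigma_1^{-k} \omega^{-k}$ for every $k \geq 0$. Setting $k = p$ and using $\sigma_1^p = \varepsilon$, the left side collapses to $\varepsilon$ while the right side is $\omega^{-p}$, so $\omega^p = \varepsilon$ and $|\omega|$ divides $p$. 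Since $|\omega| = q/\gcd(q, 2)$, this gives $q \mid 2p$ regardless of the parity of $q$. The delicate step is obtaining $j^2 \equiv 1$: it genuinely requires the reflection $\rho_0 \notin \G^+(\calP)$, which is exactly what the orientable-regular structure supplies; everything else is routine manipulation of the Coxeter-type relations.
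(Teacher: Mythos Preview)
Your argument is correct and follows essentially the same strategy as the paper: write $\sigma_1\omega\sigma_1^{-1}=\omega^{j}$, derive both $j^{p}\equiv 1$ and $j^{2}\equiv 1$ modulo $|\omega|$, combine them (using that $p$ is odd) to force $j\equiv 1$, and then use centrality of $\omega$ together with $(\sigma_1\sigma_2)^2=\varepsilon$ to obtain $\omega^{p}=\varepsilon$.

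The one substantive difference is how the congruence $j^{2}\equiv 1$ is obtained, and here your closing remark is inaccurate. You step outside $\G^{+}(\calP)$ and use $\rho_0^{2}=\varepsilon$; the paper instead stays entirely inside $\G^{+}(\calP)$ and uses $(\sigma_1\sigma_2)^{2}=\varepsilon$. Since $\sigma_2$ centralises $\omega$, one has $(\sigma_1\sigma_2)\,\omega\,(\sigma_1\sigma_2)^{-1}=\omega^{j}$, and iterating gives $\omega=(\sigma_1\sigma_2)^{2}\,\omega\,(\sigma_1\sigma_2)^{-2}=\omega^{j^{2}}$. So the step does \emph{not} genuinely require the reflection $\rho_0$; the orientable-regular hypothesis is used in the paper only in the sense that the relation $(\sigma_1\sigma_2)^2=\varepsilon$ holds in $\G^{+}(\calP)$. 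Your route through $\rho_0$ is perfectly valid, but you should drop the claim that it is forced.
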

	
	\begin{proof}
	For simplicity, let $x = \s_1 = \rho_0 \rho_1$ and $y = \s_2 = \rho_1 \rho_2$, 
	so that $xy =  \rho_0 \rho_2$ and hence $x^p = y^q = (xy)^2 = 1$, and also $\omega = y^2$.
	By hypothesis, $\langle y^2 \rangle$ is normal, and so $x y^2 x^{-1} = y^{2k}$ for some $k$. 
	It follows that $y^2 = x^p y^2 x^{-p} = y^{2k^p}$ and 
	that $y^2 = (xy)^2 y^2 (xy)^{-2} = y^{2k^2}$, 
	and therefore $2 \equiv 2k^2 \equiv 2k^p$ mod $q$. 
	Then also $2 \equiv 2k^2k^{p-2} \equiv 2k^{p-2}$ mod $q$, 
	and by induction $2 \equiv 2k^p \equiv 2k^{p-2} \equiv \cdots \equiv 2k$ mod $q$, since $p$ is odd. 
	Thus $x y^2 x^{-1} = y^{2k} = y^2$, 
	and so $\omega = y^2$ is central.
	Moreover, since $y^2$ commutes with $x$ (which has order $p$) and $xy = y^{-1}x^{-1}$, 
	we find that 
	$y^{2p} = x^p y^{2p} = (xy^2)^p = (y^{-1}x^{-1}y)^p = y^{-1}x^{-p}y = 1$, 
	and so $2p$ is a multiple of $q$. 
	\end{proof}

	We also utilise a connection between tight polyhedra and regular Cayley maps, 
	as is explained in \cite{rcm}. 
	Specifically, suppose that the finite group $G$ is generated by two non-involutory 
	elements $x$ and $y$ such that  $xy$ has order $2$, and that $G$ can be written
	as $AY$ where $Y = \langle y \rangle$ is core-free in $G$, and $A$ is a subgroup of $G$ 
	such that $A \cap Y = \{1\}$.  Then $G$ is the group $\G^+(M)$ of orientation-preserving 
	automorphisms group of a regular Cayley map $M$ for the group $A$.
	Furthermore, this map $M$ is reflexible if and only if $G$ admits an automorphism 
	taking $x \mapsto xy^2$ ($= y^{-1}x^{-1}y$) and $y \mapsto y^{-1}$.

	\begin{theorem}
	\label{thm:orient-reg-tight}
	Let $p \geq 3$ be odd. If $\calP$ is a tight orientably-regular polyhedron of type $\{p, q\}$, 
	then $q$ is an even divisor of $2p$, and $\G(\calP)$ is isomorphic to $\G(p, q)$.
	\end{theorem}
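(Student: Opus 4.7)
To prove the theorem, I reduce to \lref{s2-normal} by showing that $\omega := \s_2^{\,2} = (\rho_1\rho_2)^2$ generates a normal subgroup of $\G^+(\calP)$. Granting this, \lref{s2-normal} immediately yields $q \mid 2p$ and places $\omega$ in the centre of $\G^+(\calP)$; a short calculation then confirms that $\rho_0$ inverts $\omega$ in $\G(\calP)$---equivalently, that $(\rho_0\rho_1\rho_2\rho_1\rho_2)^2 = 1$---so $\G(\calP)$ is a homomorphic image of $\G(p,q)$. Since $|\G(\calP)| = 2pq = |\G(p,q)|$ by tightness and \tref{q-div-2p}, this homomorphism is an isomorphism.

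First I translate tightness into a group-theoretic factorization. By \tref{flat-is-tight}, tightness of $\calP$ is equivalent to $(0,2)$-flatness, which means $\G(\calP) = \langle\rho_0,\rho_1\rangle\langle\rho_1,\rho_2\rangle$ and hence $\G^+(\calP) = \langle\s_1\rangle\langle\s_2\rangle$ with trivial intersection and of order $pq$. Every element of $\G^+(\calP)$ thus has a unique normal form $\s_1^{\,a}\s_2^{\,b}$, and integrality of the edge count $pq/2$ forces $q$ to be even.

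Orientable regularity then supplies the extra ingredient needed to establish normality: conjugation by $\rho_2$ gives an involutory automorphism $\phi$ of $\G^+(\calP)$ with $\phi(\s_1) = \s_1\omega$ and $\phi(\s_2) = \s_2^{-1}$, which in the Cayley-map language of the preceding discussion (with $A = \langle\s_1\rangle \cong \Z_p$ and $Y = \langle\s_2\rangle$) is exactly the reflexibility condition. To deduce normality, pass to the quotient $\bar G := \G^+(\calP)/K$, where $K$ is the core of $\langle\s_2\rangle$ in $\G^+(\calP)$. Then $\bar G$ is a transitive subgroup of $S_p$ in which $\bar\s_1$ is a $p$-cycle, $\bar\s_2$ stabilizes the base vertex, and $(\bar\s_1\bar\s_2)^2 = 1$ (so $\bar\s_2\bar\s_1\bar\s_2 = \bar\s_1^{-1}$). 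Because $\phi(\langle\s_2\rangle) = \langle\s_2\rangle$, the automorphism $\phi$ descends to $\bar\phi$ on $\bar G$, and the image $\bar\phi(\bar\s_1) = \bar\s_1\bar\s_2^{\,2}$ must be another $p$-cycle of $\bar G$. Playing this constraint off against the transitive action structure forces $\bar\s_2^{\,2} = 1$ in $\bar G$; hence $\omega \in K$, so $\langle\omega\rangle \subseteq K$ is normal in $\G^+(\calP)$.

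The hardest part is this normality claim: the factorization together with the relations $\s_1^{\,p} = \s_2^{\,q} = (\s_1\s_2)^2 = 1$ alone do not suffice---chiral tight polyhedra can exhibit non-normal $\langle\omega\rangle$---so the reflexibility automorphism $\phi$ coming from orientable regularity is essential. The remaining steps (the factorization from tightness, invoking \lref{s2-normal}, and the final order comparison) are routine.
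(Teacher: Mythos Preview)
Your overall architecture is exactly the paper's: translate tightness into the factorisation $\G^+(\calP)=\langle\s_1\rangle\langle\s_2\rangle$, pass to the quotient by the core $K$ of $\langle\s_2\rangle$, use the reflexibility automorphism $\phi$ (conjugation by $\rho_2$) to constrain $\bar G$, deduce that $\langle\omega\rangle$ is normal, and then finish via \lref{s2-normal} and an order count. The paper frames step three in the language of regular Cayley maps for the cyclic group $\langle\bar\s_1\rangle$, but your permutation-group formulation is equivalent.

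The difference is at the one step you flag as ``the hardest part''. The paper does \emph{not} argue it directly: it invokes \cite[Theorem~3.7]{rcm}, the classification of reflexible regular Cayley maps for cyclic groups of odd order, to conclude that the only possibility is the equatorial map with $|\bar\s_2|=2$. Your sentence ``Playing this constraint off against the transitive action structure forces $\bar\s_2^{\,2}=1$'' is precisely the content of that external theorem, and it is not obvious. The relation $(\bar\s_1\bar\s_2)^2=1$ alone gives $\bar\s_2\bar\s_1\bar\s_2=\bar\s_1^{-1}$, but this is not conjugation (it would be only if $\bar\s_2^{\,2}=1$, which is what you want), so one cannot simply read off that $\bar\s_2$ inverts $\bar\s_1$; and knowing that $\bar\s_1\bar\s_2^{\,2}$ is again a $p$-cycle does not by itself force $\bar\s_2^{\,2}=1$ without further work. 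If you have a short self-contained argument here you should spell it out; otherwise you should cite the Cayley-map classification as the paper does.

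One minor point of phrasing: you write ``$\langle\omega\rangle\subseteq K$ is normal'', but containment in a normal subgroup does not imply normality. What you actually need (and what holds) is that once $\bar\s_2^{\,2}=1$ and $q$ is even, the index considerations give $K=\langle\s_2^{\,2}\rangle=\langle\omega\rangle$, so $\langle\omega\rangle$ itself is the core and hence normal.
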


	\begin{proof}
	Let $G = \G^+(\calP)$, and let $\s_1 = \rho_0 \rho_1$ and $\s_2 = \rho_1 \rho_2$ be its 
	standard generators. 
	Also take $F = \langle \s_1 \rangle$ and $V = \langle \s_2 \rangle$, which are the stabilisers 
	in $\G^+(\calP)$ of a 2-face and incident vertex of $\calP$. 
	Then $F \cap V = \langle \eps \rangle$ since $\calP$ is a polytope,
	and $G = FV$ since $\calP$ is tight. 
	
	Now, let $N$ be the core of $V$ in $G$ 
	(which is the largest normal subgroup of $G$ contained in $V$), 
	and let $\overline{G} = G/N$,  $\overline{V} = V/N$ and $\overline{F} = FN/N$. 
	Then $\overline{G}  = \overline{V} \, \overline{F}$, and $\overline{V} \cap \overline{F}$ is trivial,  
	and also $\overline{V}$ is core-free. 
	Thus $\overline{G}$ is the orientation-preserving automorphism group of 
	a regular Cayley map $M$ for the cyclic group $\overline{F}$.  
	Furthermore, since $\calP$ is an orientably-regular polyhedron, 
	the group $G = \G^+(\calP)$ has an automorphism taking $\s_1 \mapsto \s_1 \s_2^2$ 
	and $\s_2 \mapsto\s_2^{-1}$, and $\overline{G}$ has the analogous property. 
	Hence $M$ is reflexible. 
	
	On the other hand, by \cite[Theorem 3.7]{rcm} we know that the only reflexible 
	regular Cayley map for a cyclic group of odd order $p$ is the equatorial map on the 
	sphere, with $p$ vertices of valence $2$. 
	Thus $|\overline{V}| = 2$, and so $q = |V|$ is even, and $N = \langle \s_2^2 \rangle$.
	
	In particular, $\langle \s_2^2 \rangle$ is a normal subgroup of $\G^+(\calP)$, 
	and hence by \lref{s2-normal} we also find that $q$ divides $2p$, and that 
	$\s_2^2$ is central. 
	But $\s_2^2 = (\rho_1 \rho_2)^2$ is inverted under conjugation by $\rho_1$, 
	and now centralised by $\s_1 = \rho_0 \rho_1$, and therefore also inverted 
	under conjugation by $\rho_0$.
	Hence the relation $(\rho_0 \rho_1 \rho_2 \rho_1 \rho_2)^2 = \eps$ holds in $\G(\calP)$, 
	so $\G(\calP)$ is a quotient of $\G(p,q)$.
	Then finally, since $\calP$ is tight we have $|\G(\calP)| = 2pq = |\G(p,q)|$, 
	and it follows that $\G(\calP) \cong \G(p, q)$.
	\end{proof}

	Combining \tref{q-div-2p} with \tref{orient-reg-tight} and \cite[Theorem 6.3]{tight-polytopes}, 
	we can now draw the following conclusion:
	
	\begin{theorem}
	\label{thm:tight-polyhedra}
	There is a tight orientably-regular polyhedron of type $\{p, q\}$ if and only if 
	one of the following is true$\hskip 1pt :$ \\[-20 pt] 
	\begin{enumerate}
	\item $p$ and $q$ are both even, or \\[-22 pt] 
	\item $p$ is odd and $q$ is an even divisor of $2p$, or \\[-22 pt] 
	\item $q$ is odd and $p$ is an even divisor of $2q$.
	\end{enumerate}
	\end{theorem}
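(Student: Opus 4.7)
The plan is to assemble the theorem from the pieces already proved in this section, by splitting into three parity cases for $(p,q)$ and invoking duality whenever $p$ and $q$ play symmetric roles. The statement has an \emph{if} direction and an \emph{only if} direction, and each reduces to applying one of the three preceding ingredients: \cite[Theorem 6.3]{tight-polytopes} for the even-even case, \tref{q-div-2p} for existence in the mixed-parity case, and \tref{orient-reg-tight} for necessity in the mixed-parity case. The final scrap needed is the observation, already noted in the background, that tightness, orientable regularity, and Schl\"afli type (up to reversal) are all preserved under duality.

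For sufficiency I would address the three cases in turn. Case~(a), with both $p$ and $q$ even, is furnished directly by \cite[Theorem 6.3]{tight-polytopes} as recalled at the start of Section~3. Case~(b) is exactly the content of \tref{q-div-2p}. Case~(c) is obtained from case~(b) by duality: if $q$ is odd and $p$ is an even divisor of $2q$, then \tref{q-div-2p} produces a tight orientably-regular polyhedron of type $\{q,p\}$, whose dual is then a tight orientably-regular polyhedron of type $\{p,q\}$.

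For necessity, I would begin with \cite[Proposition 3.5]{tight-polytopes} (also quoted at the start of this section), which rules out $p$ and $q$ both being odd; so at least one is even. If both are even we are in case~(a). Otherwise exactly one is odd. If $p$ is odd and $q$ is even, then \tref{orient-reg-tight} forces $q$ to be an even divisor of $2p$, giving case~(b). If instead $q$ is odd and $p$ is even, then the dual of the given polyhedron is a tight orientably-regular polyhedron of type $\{q,p\}$ with $q$ odd, and applying \tref{orient-reg-tight} to this dual places us in case~(c).

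No new ideas are needed here: this is a bookkeeping synthesis of \tref{q-div-2p}, \tref{orient-reg-tight}, and the two results quoted from \cite{tight-polytopes}, glued together by duality. There is no real obstacle; all the substantive work has already been done, most notably in \tref{orient-reg-tight}, where the reflexible-regular-Cayley-map classification is what forces $q \mid 2p$ when $p$ is odd.
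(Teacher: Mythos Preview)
Your proposal is correct and matches the paper's approach exactly: the paper states the theorem as an immediate consequence of \tref{q-div-2p}, \tref{orient-reg-tight}, and \cite[Theorem 6.3]{tight-polytopes}, with duality handling the symmetric case, and that is precisely the synthesis you have written out.
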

	
\section{Tight orientably-regular polytopes in higher ranks}

	Now that we know which Schl\"afli symbols appear among tight orientably-regular polyhedra, 
	we can proceed to classify the tight orientably-regular polytopes of arbitrary rank. 
	We will say that the $(n \shm 1)$-tuple $(p_1, \ldots, p_{n-1})$ is \emph{admissible} if $p_{i-1}$ and
	$p_{i+1}$ are even divisors of $2p_i$ whenever $p_i$ is odd.
	
	\begin{theorem}
	\label{thm:tight-nec}
	If $\calP$ is a tight orientably-regular polytope of type $\{p_1, \ldots, p_{n-1}\}$, 
	then the $(n \shm 1)$-tuple $(p_1, \ldots, p_{n-1})$ is admissible.
	\end{theorem}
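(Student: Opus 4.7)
The plan is to reduce the statement to the rank-$3$ classification in \tref{tight-polyhedra}, using the background facts that every section of rank at least $3$ of a tight polytope is tight, and every section of an orientably-regular polytope is orientably-regular.

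Fix any $i$ with $p_i$ odd. For $2 \le i \le n-1$, I would take incident faces $F < G$ of $\calP$ with $\mathrm{rank}(F) = i-3$ and $\mathrm{rank}(G) = i+1$ (such faces exist because any flag of $\calP$ contains faces of every rank). The section $G/F$ then has rank $3$, and because $\calP$ is equivelar of type $\{p_1,\ldots,p_{n-1}\}$, its Schl\"afli symbol is $\{p_{i-1}, p_i\}$. By the above two facts, $G/F$ is a tight orientably-regular polyhedron. Likewise, for $1 \le i \le n-2$, taking $\mathrm{rank}(F) = i-2$ and $\mathrm{rank}(G) = i+2$ produces a tight orientably-regular polyhedron of type $\{p_i, p_{i+1}\}$.

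I would then invoke \tref{tight-polyhedra} on each such section. In the polyhedron of type $\{p_{i-1}, p_i\}$ the second entry $p_i$ is odd, so cases (a) and (b) are excluded and case (c) must hold, forcing $p_{i-1}$ to be an even divisor of $2p_i$. Similarly, in the polyhedron of type $\{p_i, p_{i+1}\}$ the first entry is odd, so case (b) applies and $p_{i+1}$ is an even divisor of $2p_i$. Together these verify the admissibility condition on $(p_1,\ldots,p_{n-1})$, with the boundary cases $i=1$ and $i=n-1$ handled vacuously on the side where $p_{i-1}$ or $p_{i+1}$ is not defined.

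No real obstacle is expected: this is essentially a bookkeeping reduction. The only care required is choosing the ranks of $F$ and $G$ correctly so that the induced rank-$3$ section has the intended Schl\"afli symbol, after which \tref{tight-polyhedra} does all the work.
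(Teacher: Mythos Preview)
Your proposal is correct and follows essentially the same route as the paper: reduce to rank-$3$ sections (which are tight by the background fact you cite, and orientably-regular because sections of orientably-regular polytopes are), then apply the rank-$3$ classification. The only cosmetic difference is that the paper invokes \tref{orient-reg-tight} directly rather than the packaged \tref{tight-polyhedra}, and cites \cite[Proposition~3.8]{tight-polytopes} for the section facts you draw from the background.
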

	
	\begin{proof}
	If $\calP$ is tight and orientably-regular, then by \cite[Proposition 3.8]{tight-polytopes} 
	we know that all of its sections of rank $3$ are tight and orientably-regular.
	Hence in particular, if $p_i$ is odd then the sections of $\calP$ of type $\{p_{i-1}, p_i\}$ and $\{p_i, p_{i+1}\}$
	are tight and orientably-regular. The rest now follows from \tref{orient-reg-tight}.
	\end{proof}

	We will prove that this necessary condition is also sufficient, which will then complete the proof of
	\tref{tight-existence2}. 
	We do this by constructing the automorphism group of a tight orientably-regular polytope of the given type.
	
	Let $(p_1, \ldots, p_{n-1})$ be an admissible $(n \shm 1)$-tuple. 
	Then we define the group $\G(p_1, \ldots, p_{n-1})$ to be the quotient of the 
	string Coxeter group $[p_1, \ldots, p_{n-1}]$ obtained by adding $n \shm 2$ 
	extra relations $r_1 = \cdots = r_{n-2} = 1$, where 
		\[ r_i = \begin{cases}
	(x_{i-1} x_i x_{i+1} x_i)^2 & \textrm{if $p_i$ and $p_{i+1}$ are both even, or } \\
	(x_{i-1} x_i x_{i+1} x_i x_{i+1})^2 & \textrm{if $p_i$ is odd and $p_{i+1}$ is even, or } \\
	(x_{i+1} x_i x_{i-1} x_i x_{i-1})^2 & \textrm{if $p_i$ is even and $p_{i+1}$ is odd.}
	\end{cases}
	\]
	Note that if $n = 3$, this definition of $\G(p_1, p_2)$ 
	coincides with the one  in the previous section. 
	
	Also let $\calP(p_1, \ldots, p_{n-1})$ be the poset obtained from
	$\G(p_1, \ldots, p_{n-1})$, using the construction in Section 2.2. 
	We will show that $\G(p_1, \ldots, p_{n-1})$ is a string C-group of order $2p_1p_2\dots p_{n-1}$, 
	and then since every relator of $\G(p_1, \ldots, p_{n-1})$ has even length, 
	it follows that $\calP(p_1, \ldots, p_{n-1})$ is a tight orientably-regular polytope 
	of type $\{p_1, \ldots, p_{n-1}\}$. 

	We start by considering the order of $\G(p_1, \ldots, p_{n-1})$.

	\begin{proposition}
	\label{prop:norm-subgp}
	Let $p_i$ be even. Then every element of $\G(p_1, \ldots, p_{n-1})$ either commutes with
	$(x_{i-1} x_i)^2$ or inverts it by conjugation.  
	In particular, the square of every element of $\G(p_1, \ldots, p_{n-1})$ commutes with
	$(x_{i-1} x_i)^2$. 
	\end{proposition}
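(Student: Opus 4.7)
The plan is to prove the stronger (and equivalent) statement that every element $g$ of $\G := \G(p_1,\ldots,p_{n-1})$ satisfies $gwg^{-1}\in\{w,w^{-1}\}$, where $w = (x_{i-1}x_i)^2$. The ``square'' consequence comes for free from this: the subset of $\G$ consisting of elements conjugating $w$ to $w^{\pm 1}$ is the preimage of $\{\pm 1\}$ under the natural homomorphism $N_{\G}(\langle w\rangle)\to\mathrm{Aut}(\langle w\rangle)$, so is a subgroup, and the resulting sign map into the abelian group $\{\pm 1\}$ is itself a homomorphism whose kernel contains every square. It therefore suffices to check that each standard generator $x_j$ conjugates $w$ to $w^{\pm 1}$.

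I would proceed by case analysis on $j$. When $j\le i-3$ or $j\ge i+2$, the generator $x_j$ commutes with both $x_{i-1}$ and $x_i$ and so with $w$. When $j\in\{i-1,i\}$, conjugation by $x_j$ inside the dihedral subgroup $\langle x_{i-1},x_i\rangle$ inverts $x_{i-1}x_i$ and hence inverts $w$. The only interesting cases are $j=i-2$ and $j=i+1$, where the relators $r_{i-1}$ and $r_i$ must be invoked.

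Since $p_i$ is even, the relator $r_{i-1}$ falls under the first or second branch of its defining piecewise formula. If $p_{i-1}$ is also even, then $r_{i-1}=(x_{i-2}x_{i-1}x_ix_{i-1})^2$ forces $x_{i-2}$ to commute with the involution $x_{i-1}x_ix_{i-1}$; combined with the fact that $x_{i-2}$ commutes with $x_i$, this yields that $x_{i-2}$ commutes with the product $(x_{i-1}x_ix_{i-1})\cdot x_i=w$. If $p_{i-1}$ is odd, then $r_{i-1}=(x_{i-2}x_{i-1}x_ix_{i-1}x_i)^2=(x_{i-2}w)^2$, so $x_{i-2}$ inverts $w$. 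The case $j=i+1$ is treated symmetrically via $r_i$: if $p_{i+1}$ is odd, then $r_i=(x_{i+1}w^{-1})^2$ gives that $x_{i+1}$ inverts $w$; if $p_{i+1}$ is even, then $r_i=(x_{i-1}x_ix_{i+1}x_i)^2$ asserts that $x_{i-1}$ commutes with $x_ix_{i+1}x_i$, from which a short manipulation yields $x_{i+1}w=wx_{i+1}$.

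The main obstacle is the subcase $j=i+1$ with $p_{i+1}$ even, because there the relator $r_i$ describes how $x_{i-1}$ interacts with $x_ix_{i+1}x_i$ rather than directly constraining the action of $x_{i+1}$ on $w$. The required bridge is to multiply the identity $x_{i-1}\cdot x_ix_{i+1}x_i=x_ix_{i+1}x_i\cdot x_{i-1}$ by $x_i$ on the left to obtain $x_{i+1}x_ix_{i-1}=x_ix_{i-1}x_ix_{i+1}x_i$, and then substitute this into the expansion $x_{i+1}w=x_{i-1}x_{i+1}x_ix_{i-1}x_i$ to collapse it to $wx_{i+1}$; once this is done the case analysis is complete and the proposition follows.
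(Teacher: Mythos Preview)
Your proof is correct and follows essentially the same route as the paper: reduce to checking each generator $x_j$, dispatch the far-away and dihedral cases trivially, and then invoke $r_{i-1}$ and $r_i$ for $j=i-2$ and $j=i+1$ according to the parity of $p_{i-1}$ and $p_{i+1}$. The only cosmetic difference is that in the subcase $j=i+1$ with $p_{i+1}$ even, the paper simply asserts $x_{i+1}\omega x_{i+1}=\omega$; your explicit computation is fine, though a slicker route mirroring your $j=i-2$ argument is to conjugate the commutation $[x_{i-1},\,x_ix_{i+1}x_i]=1$ by $x_i$ to get $[x_ix_{i-1}x_i,\,x_{i+1}]=1$, and then note that $x_{i+1}$ also commutes with $x_{i-1}$, hence with $x_{i-1}\cdot x_ix_{i-1}x_i=\omega$.
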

	
	\begin{proof}
	Let $\omega = (x_{i-1} x_i)^2$. If $j \leq i-3$ or $j \geq i+2$, then $x_j$ commutes with both $x_{i-1}$ and $x_i$, 
	and so commutes with $\omega$. 
	Also it is clear that $x_{i-1}$ and $x_i$ both conjugate $\omega$ to $\omega^{-1}$, 
	so it remains to consider only $x_{i-2}$ and $x_{i+1}$. 
	Now since $p_i$ is even, the relator $r_{i-1}$ is either
	$(x_{i-2} x_{i-1} x_i x_{i-1})^2$ or $(x_{i-2} x_{i-1} x_i x_{i-1} x_i)^2$.
	In the first case, $x_{i-2}$ commutes with $x_{i-1} x_i x_{i-1}$ and 
	hence with $(x_{i-1} x_i x_{i-1}) x_i = \omega$, 
	while in the second case, we have $(x_{i-2} \omega)^2 = 1$ and
	so $x_{i-2}$ conjugates $\omega$ to $\omega^{-1}$.  
	Similarly, the relator $r_i$ is $(x_{i-1} x_i x_{i+1} x_i)^2$ or $(x_{i+1} x_i x_{i-1} x_i x_{i-1})^2$, 
	and in these two cases we find that $x_{i+1} \omega x_{i+1} = \omega$ or $\omega^{-1}$, 
	respectively. 
	Thus every generator $x_j$ of $\G(p_1, \ldots, p_{n-1})$ either commutes with
	$(x_{i-1} x_i)^2$ or inverts it by conjugation, and it follows that the same is true for every element 
	of $\G(p_1, \ldots, p_{n-1})$.  The rest follows easily. 
	\end{proof}

	\begin{proposition}
	\label{prop:cox-2}
	Let $(p_1, \ldots, p_{n-1})$ be an admissible $(n \shm 1)$-tuple  
	with the property that for every $i$ strictly between $1$ and $n \shm 1$, 
	either $p_i = 2$ or $p_{i-1} = p_{i+1} = 2$.
	Then 
	$y_i = x_{i-1}x_i$ has order $p_i$ for all $i$, 
	and $|\G(p_1, \ldots, p_{n-1})| = 2p_1 \cdots p_{n-1}$.
	\end{proposition}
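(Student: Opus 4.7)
The plan is to show that under the given hypothesis, $\G(p_1, \ldots, p_{n-1})$ is isomorphic to the string Coxeter group $[p_1, \ldots, p_{n-1}]$ itself, and that this Coxeter group decomposes as a direct product of cyclic and dihedral factors whose combined order is exactly $2p_1 \cdots p_{n-1}$.

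First I would unpack the hypothesis by taking its contrapositive: the condition ``either $p_i = 2$ or $p_{i-1} = p_{i+1} = 2$'' for interior $i$ is equivalent to saying that no two consecutive positions in $(p_1, \ldots, p_{n-1})$ both carry a value different from $2$.  In short, the non-$2$ values are \emph{isolated} in the sequence.

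The heart of the argument is then a direct-product decomposition of the Coxeter group.  For each $i$ with $p_i = 2$, the relation $(x_{i-1} x_i)^2 = 1$ together with the distance-$2$ commutations $(x_j x_k)^2 = 1$ for $|j-k| \geq 2$ tells us that $\langle x_0, \ldots, x_{i-1}\rangle$ commutes elementwise with $\langle x_i, \ldots, x_{n-1}\rangle$, so $[p_1, \ldots, p_{n-1}]$ splits as the internal direct product of these two subgroups.  Iterating this at each $2$-position gives a decomposition into pieces sitting between successive $2$-positions; by the isolation property each such piece contains at most one non-$2$ label, so each factor is either a single $\Z_2 = \langle x_j\rangle$ or a dihedral group of order $2p$ of the form $\langle x_j, x_{j+1}\rangle$ with $(x_j x_{j+1})^p = 1$.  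A direct count shows the product has order exactly $2p_1 \cdots p_{n-1}$, and each $y_i = x_{i-1} x_i$ visibly has order $p_i$ in the corresponding factor.

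It remains to verify that each extra relator $r_i$ already holds in this Coxeter group, so that $\G(p_1, \ldots, p_{n-1})$ coincides with it.  For each $i$ the hypothesis forces at least one of $p_i, p_{i+1}$ to equal $2$, and when $p_i \neq 2$ we additionally have $p_{i-1} = 2$.  In each of the three cases of the definition of $r_i$, these extra commutations combined with the string commutations allow the middle subword of the form $(x_a x_b)^2$ (with $(x_a x_b)^2 = 1$) to collapse, so that the whole word telescopes to $x_c^{\,2} = 1$ for some $c$.  The main obstacle here is simply bookkeeping in the resulting case analysis (three forms of $r_i$, each under two configurations of the hypothesis), but each individual reduction is a routine application of the available commutations.
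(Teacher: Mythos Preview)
Your approach is correct and is essentially the paper's argument recast non-inductively: both rest on the observation that each position with $p_j = 2$ makes $x_{j-1}$ commute with $x_j$, splitting the group as a direct product there. The paper proceeds by induction on $n$, peeling off one factor $\langle x_0\rangle$ or $\langle x_0, x_1\rangle$ at a time, whereas you carry out the full decomposition at once and make explicit the check that each extra relator $r_i$ already holds in the Coxeter group --- a verification the paper's inductive step performs only implicitly when it writes $\G(2, p_2, \ldots, p_{n-1}) \cong \langle x_0\rangle \times \G(p_2, \ldots, p_{n-1})$.
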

	
	\begin{proof}
	We use induction on $n$, and the observation that if $p_j = 2$, 
	then $1 = (x_{j-1}x_j)^2$, so that $x_{j-1}$ commutes with $x_j$, 
	and therefore $\langle x_0, \ldots, x_{j-1} \rangle$ centralises $\langle x_j, \ldots, x_{n-1} \rangle$. 
	%
	Now if $p_1 = 2$, then 
	$\G(p_1,p_2, \ldots, p_{n-1}) = \G(2,p_2, \ldots, p_{n-1}) 
	  \cong \langle x_0 \rangle \times \G(p_2, \ldots, p_{n-1})$, 
	and so 
	\[|\G(p_1, \ldots, p_{n-1})| = 2|\G(p_2, \ldots, p_{n-1})| 
	 = 4p_2 \cdots p_{n-1} = 2p_1p_2 \cdots p_{n-1}.\] 
	Otherwise $p_2 = 2$ and 
	$\G(p_1,p_2, \ldots, p_{n-1}) = \G(p_1,2,p_3,\ldots, p_{n-1}) 
	  \cong \G(p_1) \times \G(p_3, \ldots, p_{n-1})$, so
	 \[|\G(p_1, \ldots, p_{n-1})| = |\G(p_1)| \, |\G(p_3, \ldots, p_{n-1})| 
	 = 2p_1 \, 2p_3 \cdots p_{n-1} = 2p_1p_2 \cdots p_{n-1}.\] 
	The claim about the orders of the elements $y_i = x_{i-1}x_i$ 
	follows easily by induction as well. 
	\end{proof}
	
	${}$\\[-30pt]
	\begin{lemma}
	\label{lem:tight-gp}
	Let $q_i$ be the order of $x_{i-1} x_i$ in $\G(p_1, \ldots, p_{n-1})$, 
	for $1 \leq i \leq n \shm 1$. 
	Then $q_i = p_i$ whenever $p_i$ is odd, 
	and also $|\G(p_1, \ldots, p_{n-1})| = 2q_1 \cdots q_{n-1}$, which divides $2p_1 \cdots p_{n-1}.$ 
	\end{lemma}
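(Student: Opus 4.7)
The plan is to address the two assertions in turn.

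For the claim that $q_i = p_i$ when $p_i$ is odd, I would exhibit an explicit epimorphism $\varphi$ from $\G(p_1, \ldots, p_{n-1})$ onto the dihedral group $D_{p_i}$ of order $2p_i$, sending $x_{i-1}$ and $x_i$ to the two standard involutary generators of $D_{p_i}$ and every other $x_j$ to the identity. The verification that $\varphi$ respects the defining relations of $\G(p_1, \ldots, p_{n-1})$ hinges on admissibility: since $p_i$ is odd, both $p_{i-1}$ and $p_{i+1}$ are even, so the Coxeter relations $(x_{i-2}x_{i-1})^{p_{i-1}} = 1$ and $(x_i x_{i+1})^{p_{i+1}} = 1$ become trivial (even powers of involutions) in the image, while the remaining Coxeter relations involve two killed generators. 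A brief case check on the three possible forms of each relator $r_j$ shows that every such relator also maps to the identity, because in each case all surviving occurrences of $x_{i-1}$ or $x_i$ simplify via $x_{i-1}^2 = x_i^2 = 1$ to a power of a single involution, which squares to $1$. Since $\varphi(x_{i-1}x_i)$ has order exactly $p_i$ in $D_{p_i}$, so does $x_{i-1}x_i$ in $\G(p_1, \ldots, p_{n-1})$, and combined with the Coxeter relation $(x_{i-1}x_i)^{p_i} = 1$ this gives $q_i = p_i$.

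For the order formula, I would use the cyclic normal subgroups supplied by \pref{norm-subgp}. Let $N$ be the subgroup of $\G = \G(p_1, \ldots, p_{n-1})$ generated by $\{\omega_i = (x_{i-1}x_i)^2 : p_i \text{ even}\}$; by \pref{norm-subgp} each $\langle\omega_i\rangle$ is cyclic and normal in $\G$, so $N$ is the product of these cyclic normal subgroups. In $\G/N$ every $(x_{j-1}x_j)^2$ with $p_j$ even is trivial, and hence $\G/N$ is a quotient of $\G(p'_1, \ldots, p'_{n-1})$, where $p'_j = 2$ when $p_j$ is even and $p'_j = p_j$ when $p_j$ is odd. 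The new tuple is admissible (odd $p_j$ still forces its neighbours to be even) and satisfies the hypothesis of \pref{cox-2}, so $|\G(p'_1, \ldots, p'_{n-1})| = 2\prod_j p'_j$. Combining this bound on $|\G/N|$ with a product bound on $|N|$ coming from the cyclic orders of the $\omega_i$ produces the desired upper bound on $|\G|$, and the divisibility statement $|\G| \mid 2p_1 \cdots p_{n-1}$ then follows from $q_i \mid p_i$.

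The hardest part will be sharpening these upper bounds into equalities. Specifically, I would need to show (a) that $\G/N$ is genuinely isomorphic to $\G(p'_1, \ldots, p'_{n-1})$ rather than a proper quotient, and (b) that the cyclic subgroups $\langle\omega_i\rangle$ for even $p_i$ are sufficiently independent that the product $N$ has order exactly $\prod_{p_i \text{ even}}|\langle\omega_i\rangle|$. Both steps rely on commutator bookkeeping via \pref{norm-subgp} --- recording for each pair $(x_j, \omega_i)$ whether $x_j$ centralises or inverts $\omega_i$ --- together with induction, most likely on the number of even entries in the tuple $(p_1, \ldots, p_{n-1})$. Assembling the two halves, $|\G| = |\G/N|\,|N|$ simplifies to $2q_1 \cdots q_{n-1}$ after cancelling each factor of $2$ in $\G/N$ (for even $p_j$) against the corresponding factor of $2$ in $|\langle\omega_j\rangle| = q_j/2$.
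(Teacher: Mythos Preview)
Your approach is essentially the paper's: both pass to the quotient by the normal subgroup $N$ generated by the $\omega_i$ with $p_i$ even, identify that quotient with $\G(k_1,\ldots,k_{n-1})$ (your $p'_j$ are the paper's $k_j$), and invoke \pref{norm-subgp} and \pref{cox-2}. Two small remarks. First, your step~(a) is easier than you indicate: since \pref{norm-subgp} already shows that $N$ is normal, the kernel of the natural epimorphism $\G(p_1,\ldots,p_{n-1})\to\G(p'_1,\ldots,p'_{n-1})$ --- namely the normal closure of the $\omega_i$ --- is exactly $N$, so $\G/N\cong\G(p'_1,\ldots,p'_{n-1})$ immediately. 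Second, for the claim $q_i=p_i$ when $p_i$ is odd, the paper reads this off directly from the same quotient via \pref{cox-2}, rather than building a separate dihedral quotient; since you need that quotient anyway for the order formula, this spares you the case-check on the relators $r_j$ (which does go through, but is redundant). For step~(b) the paper is terse, appealing to ``the intersection condition'' to get $|N|=\prod_{p_i\text{ even}} q_i/2$, whereas you propose induction on the number of even entries; either route requires some care, and you are right to flag it as the crux.
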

	
	\begin{proof}
	Let $ k_i = p_i$ when $p_i$ is odd, or $2$ when $p_i$ is even. 
	Then since $k_i$ divides $p_i$ for all $i$, there exists an 
	epimorphism $\pi: \G(p_1, \ldots, p_{n-1}) \to  \G(k_1, \ldots, k_{n-1})$. 
	Also the $(n \shm 1)$-tuple $(k_1, \ldots, k_{n-1})$ is admissible, 
	and indeed $k_{i-1}$ and $k_{i+1}$ are both $2$ whenever $k_i$ is odd 
	(since $p_{i-1}$ and $p_{i+1}$ are both even whenever $p_i$ is odd). 
	Thus $(k_1, \ldots, k_{n-1})$ satisfies the hypotheses of Proposition~\ref{prop:cox-2}, 
	and so $|\G(k_1, \ldots, k_{n-1})| = 2k_1 \cdots k_{n-1}$.
	
	Moreover, Proposition~\ref{prop:cox-2} tells us that when $p_i$ is odd, the order 
	of the image of $x_i$ in $\G(k_1, \ldots, k_{n-1})$ is $k_i = p_i$, and so $q_i = p_i$;   
	on the other hand, if $p_i$ is even, then the order of the image of $x_i$ 
	in $\G(k_1, \ldots, k_{n-1})$ is $k_i = 2$, and so $q_i$ is even in that case. 
	
	Now the kernel of the epimorphism $\pi$ is the smallest normal subgroup 
	of $\G(p_1, \ldots, p_{n-1})$ containing the elements $(x_{i-1} x_i)^2$ 
	for those $i$ such that $p_i$ is even.  By Proposition~\ref{prop:norm-subgp}, 
	however, the subgroup $N$ generated by these elements is normal 
	in $\G(p_1, \ldots, p_{n-1})$, and abelian. 
	Hence in particular, $N = \ker\pi$, and also by the intersection condition, 
	$|N|$ is the product of the numbers $q_i/2$ over all $i$ for which $p_i$ is even. 
	Thus $|\G(p_1, \ldots, p_{n-1})| = 2q_1 \cdots q_{n-1}$.
	\end{proof}
	
	In order to use \tref{fap} to build our tight regular polytopes recursively, 
	we need two more observations.  
	The first concerns the flat amalgamation property (FAP): 
	
	\begin{proposition}
	\label{prop:fap}
	If $p_2$ is even, then $\calP(p_1, \ldots, p_{n-1})$ has the FAP with respect to its $2$-faces, and if
	$p_{n-2}$ is even, then $\calP(p_1, \ldots, p_{n-1})$ has the FAP with respect to its co-$(n \shm 3)$-faces.
	\end{proposition}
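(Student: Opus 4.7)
The plan is to verify each FAP by computing a presentation of the relevant subgroup directly. For the $2$-faces case, I would set $\rho_i = \eps$ for $i \geq 2$ in $\G(p_1,\ldots,p_{n-1})$ and show the resulting quotient is the dihedral group $D_{p_1}$ of order $2p_1$. The Coxeter relations $(\rho_i\rho_j)^2 = \eps$ with $|i-j| \geq 2$ trivialize; the relation $(\rho_{i-1}\rho_i)^{p_i} = \eps$ is trivial for $i \geq 3$, while for $i = 2$ it reduces to $\rho_1^{p_2} = \eps$, which holds precisely because $p_2$ is even. For the extra relators, the hypothesis $p_2$ even rules out the third case in the definition of $r_1$, and in the surviving two forms $r_1$ collapses (using $\rho_1^2 = \eps$) to $\rho_0^2 = \eps$; a similar case-check handles $r_i$ for $i \geq 2$, where all three forms collapse to $\rho_1^2$ or the empty word. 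This gives an upper bound $2p_1$ on the order of the quotient.

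For the matching lower bound, I would construct an epimorphism from $\G(p_1,\ldots,p_{n-1})$ onto $D_{p_1} = \gp{y_0, y_1}{y_0^2, y_1^2, (y_0y_1)^{p_1}}$ by sending $\rho_0 \mapsto y_0$, $\rho_1 \mapsto y_1$, and $\rho_i \mapsto \eps$ for $i \geq 2$, verifying that every defining relation is satisfied in $D_{p_1}$ (the parity of $p_2$ is again essential, to ensure both $y_1^{p_2} = 1$ and the collapse of the image of $r_1$). Thus the quotient has order exactly $2p_1$, i.e., is isomorphic to $D_{p_1}$. Since the subgroup $\langle \rho_0, \rho_1 \rangle$ of $\G(p_1, \ldots, p_{n-1})$ also satisfies the defining relations of $D_{p_1}$ and surjects onto $D_{p_1}$ via the restricted epimorphism, it too has order exactly $2p_1$; the natural map from $\langle \rho_0, \rho_1 \rangle$ to the quotient is then a surjection between groups of equal finite order, hence an isomorphism, establishing the FAP with respect to the $2$-faces.

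The co-$(n-3)$-faces case can be handled by an entirely symmetric argument, setting $\rho_i = \eps$ for $i \leq n-3$ and using $p_{n-2}$ even in exactly the role that $p_2$ even played above: it eliminates case~2 at position $n-2$ and makes $\rho_{n-1}^{p_{n-2}} = \eps$ trivial. Equivalently, one can observe that the defining relators of $\G(p_1,\ldots,p_{n-1})$ are preserved (up to cyclic rotation of each $r_i$, which does not alter the normal closure) under the reversal $\rho_i \leftrightarrow \rho_{n-1-i}$, which fixes the first case and interchanges the second and third; this yields an isomorphism $\G(p_1,\ldots,p_{n-1}) \cong \G(p_{n-1},\ldots,p_1)$, reducing the second statement to the first. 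The main technical obstacle throughout is the case analysis on the collapse of the boundary relator $r_1$ (resp.\ $r_{n-2}$), but the parity hypothesis on $p_2$ (resp.\ $p_{n-2}$) conveniently eliminates the single problematic case in each direction and lets the collapse go through cleanly.
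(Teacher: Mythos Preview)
Your proposal is correct and follows essentially the same route as the paper: kill the generators $\rho_i$ with $i \ge 2$, check case-by-case that the Coxeter relators and the extra relators $r_1,\ldots,r_{n-2}$ all become consequences of $\rho_0^{\,2}$ and $\rho_1^{\,2}$ (using that $p_2$ even rules out the one problematic form of $r_1$), and conclude that the quotient has the dihedral presentation $\langle \rho_0,\rho_1 \mid \rho_0^{\,2},\rho_1^{\,2},(\rho_0\rho_1)^{p_1}\rangle$, then invoke duality for the co-$(n\shm 3)$-face statement. The only difference is that you spell out more carefully than the paper why the natural surjection $\langle \rho_0,\rho_1\rangle \to \G/N$ is an isomorphism (via the epimorphism onto $D_{p_1}$ and an order comparison), whereas the paper simply identifies the quotient presentation with ``the automorphism group of the $2$-faces'' and leaves that step implicit; your extra care is not wasted, since at this point in the development it has not yet been shown in general that $\rho_0\rho_1$ has order exactly $p_1$ in $\G(p_1,\ldots,p_{n-1})$.
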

	
	\begin{proof}
	Let $p_2$ be even, and consider the effect of killing the generators 
	$x_i$ of $\G(p_1, \ldots, p_{n-1})$, for $i \geq 2$ 
	(that is, by adding the relations $x_i = 1$ to the presentation for $\G(p_1, \ldots, p_{n-1})$).  
	Each of the relators $r_3, \ldots, r_{n-2}$ contains only generators $x_i$ with $i \geq 2$, 
	so becomes redundant, and may be removed. 
	The relator $r_2$ reduces to $x_1^{\,2}$ or $x_1^{\,4}$, 
	while $r_1$ reduces to $(x_0 x_1^2)^2$, which is equivalent to $x_0^2$,
	and hence all of these become redundant too. 
	Thus adding the relations $x_i = 1$ to $\G(p_1, \ldots, p_{n-1})$
	has the same effect as adding the relations $x_i = 1$ to the string 
	Coxeter group $[p_1, \ldots, p_{n-1}]$. 
	It is easy to see that this gives the quotient group 
	with presentation $\gp{x_0, x_1}{x_0^2, x_1^2, (x_0 x_1)^{p_1}}$, which is
	the automorphism group of the $2$-faces of $\calP(p_1, \ldots, p_{n-1})$. 
	Thus $\calP(p_1, \ldots, p_{n-1})$ has the FAP with respect to its $2$-faces. 
	The second claim can be proved by a dual argument.
	\end{proof}
	
	\begin{proposition}
	\label{prop:tight-recursive}
	Let $\calP$ be an equivelar $n$-polytope with tight $m$-faces and tight co-$k$-faces, 
	where $m \geq k+3$.  Then $\calP$ is tight.
	\end{proposition}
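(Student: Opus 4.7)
The plan is to reduce tightness to a collection of $(i,i+2)$-flatness conditions via \tref{flat-is-tight}, and then to obtain each such flatness statement either from the tight $m$-faces (for small $i$) or from the tight co-$k$-faces (for large $i$), using the two halves of \pref{4e3}. The hypothesis $m \geq k+3$ will appear naturally as precisely the condition that makes the two ranges of $i$ overlap to cover everything from $0$ up to $n-3$.

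More concretely, I would first note that by \tref{flat-is-tight} it suffices to show that $\calP$ is $(i,i+2)$-flat for every $i$ with $0 \leq i \leq n-3$. For the lower range, I would observe that since every $m$-face of $\calP$ is itself a tight $m$-polytope, another application of \tref{flat-is-tight} (in the easy direction) shows that each $m$-face is $(j,j+2)$-flat for $0 \leq j \leq m-3$; then the first half of \pref{4e3} (applied with its role of $i$ being $m$) transfers this flatness to $\calP$ itself. This yields $(i,i+2)$-flatness of $\calP$ for all $0 \leq i \leq m-3$.

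For the upper range, I would run the dual argument: each co-$k$-face of $\calP$ has rank $n \shm k \shm 1$ and is tight, hence is $(j,j+2)$-flat for $0 \leq j \leq n \shm k \shm 4$. Applying the second half of \pref{4e3} with its role of $i$ being $k$, and with the shift $(j,j+2) \mapsto (j \shp k \shp 1, j \shp k \shp 3)$, this transfers to $\calP$ as $(i,i+2)$-flatness for all $i$ with $k+1 \leq i \leq n-3$.

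Finally, I would just check that the two ranges $[0, m-3]$ and $[k+1, n-3]$ together cover $[0, n-3]$. This happens exactly when $k+1 \leq (m-3)+1$, i.e.\ when $m \geq k+3$, which is the hypothesis. There is no real obstacle here; the only thing to verify carefully is that the indices in \pref{4e3} line up correctly under the dual application, but this is routine book-keeping rather than a genuine difficulty.
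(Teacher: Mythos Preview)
Your proposal is correct and follows essentially the same argument as the paper: both reduce tightness to $(i,i\shp 2)$-flatness via \tref{flat-is-tight}, obtain the range $0 \le i \le m\shm 3$ from the tight $m$-faces and the range $k\shp 1 \le i \le n\shm 3$ from the tight co-$k$-faces using the two parts of \pref{4e3}, and then observe that $m \ge k\shp 3$ makes these ranges cover $[0,n\shm 3]$.
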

	
	\begin{proof}
	Since the $m$-faces are tight, they are $(i, i \shp 2)$-flat for $0 \leq i \leq m \shm 3$, 
	by \tref{flat-is-tight}, and then by \pref{4e3}, the polytope $\calP$ is  $(i, i+2)$-flat 
	for $0 \leq i \leq m \shm 3$.
	Similarly, the co-$k$-faces are $(i, i \shp 2)$-flat for $0 \leq i \leq n \shm k \shm 4$, 
	and $\calP$ is $(i, i+2)$-flat for $k \shp 1 \leq i \leq n \shm 3$.
	Finally, since $m \geq k+3$, we see that $\calP$ is $(i, i+2)$-flat for $0 \leq i \leq n \shm 3$, 
	and again \tref{flat-is-tight} applies, to show that $\calP$ is tight.
	\end{proof}

	We can now prove the following.
	
	\begin{theorem}
	\label{thm:tight-existence}
	Let  $(p_1, \ldots, p_{n-1})$ be an admissible $(n \shm 1)$-tuple, with $n \geq 4$. 
	Also suppose that $p_{i-1}$ and $p_{i+1}$ are both even, for some $i$ $($with $2 \leq i \leq n \shm 2)$.
	If $\calP(p_1, \ldots, p_i)$ is a tight orientably-regular polytope of type $\{p_1, \ldots, p_i\}$, 
	and $\calP(p_i, \ldots, p_{n-1})$ is a tight orientably-regular polytope of type $\{p_i, \ldots, p_{n-1}\}$,
	then $\calP(p_1, \ldots, p_{n-1})$ is a tight orientably-regular polytope of type $\{p_1, \ldots, p_{n-1}\}$.
	\end{theorem}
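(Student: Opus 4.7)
The plan is to realise $\calP(p_1, \ldots, p_{n-1})$ as the amalgamation of $\calP_1 = \calP(p_1, \ldots, p_i)$ and $\calP_2 = \calP(p_i, \ldots, p_{n-1})$ supplied by \tref{fap}, and then to identify this amalgam with the poset obtained from the abstract presentation of $\G(p_1, \ldots, p_{n-1})$ by appealing to \lref{tight-gp}.

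First I will invoke \tref{fap} with $m = i+1$ (the rank of $\calP_1$) and $k = i-2$, taking $\calP_2$ (of rank $n-i+1$) as the second factor, so that the resulting polytope has rank $k + (n-i+1) + 1 = n$. The co-$(i-2)$-faces of $\calP_1$ and the $2$-faces of $\calP_2$ are both polygons $\{p_i\}$, matching the required section compatibility. Since $p_{i-1}$ is even, \pref{fap} gives $\calP_1$ the FAP with respect to its co-$(i-2)$-faces (these are its co-$(m-3)$-faces), and since $p_{i+1}$ is even, \pref{fap} also gives $\calP_2$ the FAP with respect to its $2$-faces. Applying \tref{fap} then yields a regular $n$-polytope $\calP$ that is $(i-2, i+1)$-flat, whose $(i+1)$-faces are copies of $\calP_1$ and whose co-$(i-2)$-faces are copies of $\calP_2$; tracking Schl\"afli data, $\calP$ has type $\{p_1, \ldots, p_{n-1}\}$.

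Next I will verify tightness via \pref{tight-recursive}: with $m = i+1$ and $k = i-2$ the hypothesis $m \geq k+3$ is satisfied (with equality), and the $m$-faces and co-$k$-faces of $\calP$ are the tight polytopes $\calP_1$ and $\calP_2$. Hence $\calP$ is tight and $|\G(\calP)| = 2p_1 \cdots p_{n-1}$.

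The final step is to identify $\G(\calP)$ with $\G(p_1, \ldots, p_{n-1})$, from which orientable regularity follows because every defining relator of the latter has even length. To do this I will exhibit the surjection $\G(p_1, \ldots, p_{n-1}) \to \G(\calP)$ sending each $x_j$ to $\rho_j$ by checking that every defining relator holds in $\G(\calP)$. The Coxeter relations are immediate from the type of $\calP$; for each extra relator $r_j$, which involves only $\rho_{j-1}, \rho_j, \rho_{j+1}$, either $j \leq i-1$, in which case these generators lie in $\langle \rho_0, \ldots, \rho_i \rangle \cong \G(\calP_1) = \G(p_1, \ldots, p_i)$, where $r_j$ holds by definition, or $j \geq i$, in which case they lie in $\langle \rho_{i-1}, \ldots, \rho_{n-1} \rangle \cong \G(\calP_2) = \G(p_i, \ldots, p_{n-1})$, where the correspondingly re-indexed defining relator is literally $r_j$ in the shared generators. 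Combined with the upper bound $|\G(p_1, \ldots, p_{n-1})| \leq 2p_1 \cdots p_{n-1} = |\G(\calP)|$ from \lref{tight-gp}, the surjection is forced to be an isomorphism. The main technical nuisance I anticipate is keeping the rank shifts consistent when translating the FAP hypotheses into the notation of \tref{fap} and when recognising the re-indexed relator inside $\G(\calP_2)$.
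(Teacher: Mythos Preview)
Your proposal is correct and follows essentially the same route as the paper: apply \pref{fap} to get the FAP for $\calP_1$ and $\calP_2$, amalgamate via \tref{fap}, use \pref{tight-recursive} to conclude tightness, check that the defining relators of $\G(p_1,\ldots,p_{n-1})$ hold in $\G(\calP)$ (using that each $r_j$ lives in the subgroup corresponding to either $\calP_1$ or $\calP_2$), and compare orders using \lref{tight-gp}. Your parameter bookkeeping for \tref{fap} and the split of the relators at $j\le i-1$ versus $j\ge i$ is accurate.
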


	\begin{proof}
	Let $\calP_1 = \calP(p_1, \ldots, p_i)$ and $\calP_2 = \calP(p_i, \ldots, p_{n-1})$, 
	which by hypothesis are tight orientably-regular polytopes of the appropriate types. 
	Since $p_{i-1}$ and $p_{i+1}$ are even, \pref{fap} tells us that $\calP_1$ has the FAP 
	with respect to its co-$(i \shm 2)$-faces, and $\calP_2$ has the FAP with respect 
	to its $2$-faces. 
	Then by \tref{fap}, there exists a regular polytope $\calP$ with $(i \shp 1)$-faces 
	isomorphic to $\calP_1$ and co-$(i \shm 2)$-faces isomorphic to $\calP_2$. 
	Moreover, since $\calP_1$ and $\calP_2$ are both tight,
	\pref{tight-recursive} implies that $\calP$ is also tight, and then 
	since $\calP$ is of type $\{p_1, \ldots, p_{n-1}\}$, we find that $|\G(\calP)| = 2p_1 \cdots p_{n-1}$. 
	But also the $(i \shp 1)$-faces of $\calP$ are isomorphic to $\calP_1$, 
	and the co-$(i \shm 2)$-faces are isomorphic to $\calP_2$, 
	and so the standard generators of $\G(\calP)$ must satisfy all 
	the relations of $\G(p_1, \ldots, p_{n-1})$.  
	In particular, $\G(\calP)$ is a quotient of $\G(p_1, \ldots, p_{n-1})$, 
	and so $|\G(p_1, \ldots, p_{n-1})| \ge 2p_1 \cdots p_{n-1}$. 
	On the other hand, $|\G(p_1, \ldots, p_{n-1})| \leq 2p_1 \cdots p_{n-1}$ 
	by \lref{tight-gp}.   Thus $|\G(p_1, \ldots, p_{n-1})| = 2p_1 \cdots p_{n-1}$, 
	and hence also $\G(\calP) \cong \G(p_1, \ldots, p_{n-1})$, 
	and $\calP \cong \calP(p_1, \ldots, p_{n-1})$. 
	Thus $\calP(p_1, \ldots, p_{n-1})$ is a tight polytope of type $\{p_1, \ldots, p_{n-1}\}$, 
	and finally, since all the defining relations of $\G(p_1, \ldots, p_{n-1})$ 
	have even length, we find that $\calP(p_1, \ldots, p_{n-1})$ is orientably-regular.
	\end{proof}

      	Note that the above theorem helps us deal with a large number of possibilities, 
	once we have enough `building blocks' in place. 
	We are assuming that the $(n \shm 1)$-tuple $(p_1, \ldots, p_{n-1})$ is admissible, 
	so that $p_{i-1}$ and $p_{i+1}$ are even divisors of $2p_i$ whenever $p_i$ is odd. 
	Now suppose that $n \ge 6$.  
	If $p_2$ and $p_4$ are both even, then \tref{tight-existence} will apply, 
	and if not, then one of them is odd, say $p_2$, in which case  $p_1$ and $p_3$ 
	must both be even, and again \tref{tight-existence} will apply. 
	Hence this leaves us with just a few cases to verify, namely admissible $(n \shm 1)$-tuples 
	$(p_1, \ldots, p_{n-1})$ with $n = 4$ or $5$ for which there is no $i$ such that 
	$p_{i-1}$ and $p_{i+1}$ are both even. 
	
	The only such cases are as follows: \\[-22 pt] 
	\begin{enumerate}
	\item[$\bullet$] $n = 4$, with $p_1$ odd, $p_2$ even and $p_3$ even, 
	  or dually,  $p_1$ even, $p_2$ even and $p_3$ odd, \\[-22 pt] 
	\item[$\bullet$] $n = 4$, with $p_1$ odd, $p_2$ even and $p_3$ odd,  \\[-22 pt] 
	\item[$\bullet$] $n = 5$, with $p_1$ odd, $p_2$ even, $p_3$ even and $p_4$ odd.
	\end{enumerate}

	We start with the cases where $n = 4$:
	
	\begin{proposition}
	\label{prop:odd-even}
	If $p_1$ is odd, $p_2$ is an even divisor of $2p_1$, and $p_3 \geq 2$, 
	then $\G(p_1, p_2, p_3)$ is a string C-group, 
	and $\calP(p_1, p_2, p_3)$ is a tight orientably-regular polytope of type $\{p_1, q, p_3\}$, 
	for some even $q$ dividing $p_2$. 
	\end{proposition}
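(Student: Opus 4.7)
My plan is to imitate the proof of \tref{q-div-2p}, reducing $\G = \G(p_1, p_2, p_3)$ modulo a cyclic normal subgroup to identify its order, and then invoking \pref{facet-vfig} to obtain the string C-group property. Let $\omega = (x_1 x_2)^2$; since $p_2$ is even, \pref{norm-subgp} tells us that $N = \langle \omega \rangle$ is a cyclic normal subgroup of $\G$. I would first verify that the quotient $\overline{\G} = \G/N$ coincides with the string Coxeter group $[p_1, 2, p_3] \cong D_{p_1} \times D_{p_3}$: after adding the relation $\omega = 1$ the generators $x_1$ and $x_2$ commute, and a direct check shows that both $r_1 = (x_0 \omega)^2$ and $r_2$ (in either of its two possible forms, depending on the parity of $p_3$) reduce to trivial consequences of the Coxeter relations. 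Writing $q$ for the order of $x_1 x_2$ in $\G$, this yields $|N| = q/2$, hence $|\G| = 2 p_1 q p_3$. By \lref{tight-gp}, $q$ is an even divisor of $p_2$; and since $\overline{x_0 x_1}$ and $\overline{x_2 x_3}$ already have orders $p_1$ and $p_3$ in $\overline{\G}$, the same orders are realised in $\G$, pinning down the Schl\"afli symbol of the eventual polytope as $\{p_1, q, p_3\}$.

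For the string C-group property I would apply \pref{facet-vfig} to the subgroups $A = \langle x_0, x_1, x_2 \rangle$ and $B = \langle x_1, x_2, x_3 \rangle$. Their images in $\overline{\G}$ are $D_{p_1} \times \langle \overline{x_2} \rangle$ and $\langle \overline{x_1} \rangle \times D_{p_3}$, of orders $4 p_1$ and $4 p_3$ respectively, and since $N$ lies in both, we get $|A| = 2 p_1 q$ and $|B| = 2 q p_3$. Now $A$ satisfies the defining relations of the rank-$3$ group $\G(p_1, q)$ from Section~3 (noting that $q$ divides $p_2$, which divides $2 p_1$), so \tref{q-div-2p} together with the order match shows $A \cong \G(p_1, q)$ is a string C-group. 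For $B$, when $p_3$ is even the relator $r_2 = (x_1 x_2 x_3 x_2)^2$ makes $B$ a quotient of the both-even tight polyhedron group from \cite[Theorem~6.3]{tight-polytopes}, again of order $2 q p_3$, so $B$ is isomorphic to that string C-group. When $p_3$ is odd, I would apply \lref{s2-normal} to the subgroup $\langle x_3 x_2, x_2 x_1 \rangle$ (noting that $\langle (x_2 x_1)^2 \rangle = N$ is normal) to deduce $q \mid 2 p_3$; then $B \cong \G(p_3, q)$ under the dual labelling $\rho_i \mapsto x_{3-i}$, which is again a string C-group by \tref{q-div-2p}.

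The intersection condition falls out of the quotient analysis: $\overline{A} \cap \overline{B} = \langle \overline{x_1} \rangle \times \langle \overline{x_2} \rangle$ is the image of $C = \langle x_1, x_2 \rangle$, and because $N \le C$ this pulls back to $A \cap B \subseteq C$. \pref{facet-vfig} then gives that $\G$ is a string C-group, so $\calP(p_1, p_2, p_3)$ is a regular polytope of type $\{p_1, q, p_3\}$ with $|\G| = 2 p_1 q p_3$ flags, hence tight; orientable regularity is immediate because every defining relator of $\G$ has even length. I expect the main obstacle to be the analysis of $B$ when $p_3$ is odd, since one must extract $q \mid 2 p_3$ from \lref{s2-normal} before \tref{q-div-2p} can be applied to identify $B$ as a string C-group of the expected order.
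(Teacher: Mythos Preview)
Your argument is correct, and its overall architecture---quotient by $N=\langle(x_1x_2)^2\rangle$ onto $[p_1,2,p_3]$, then \pref{facet-vfig} with the intersection pulled back through $N\subseteq\langle x_1,x_2\rangle$---is exactly the paper's. Where you diverge is in establishing that $A=\langle x_0,x_1,x_2\rangle$ and $B=\langle x_1,x_2,x_3\rangle$ are string C-groups. You identify each explicitly as a known tight-polyhedron group by matching orders, which forces a case split on the parity of $p_3$ and, in the odd case, an appeal to the group-theoretic content of \lref{s2-normal} to secure $q\mid 2p_3$ before \tref{q-div-2p} applies. The paper instead dispatches both subgroups in one stroke via \pref{quo-crit}: the map $A\to\langle y_0,y_1,y_2\rangle\cong[p_1,2]$ is injective on $\langle x_0,x_1\rangle$ (this subgroup has order at most $2p_1$ and surjects onto the dihedral group of that order), and dually for $B$. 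This bypasses any identification of $A$ and $B$ and any parity discussion.

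One minor point worth tightening: \lref{s2-normal} is stated for an orientably-regular polyhedron, which $B$ is not yet known to be at the moment you invoke it. The \emph{proof} of that lemma is purely a computation in the group $\langle x,y\mid x^p=y^q=(xy)^2=1,\ldots\rangle$ with $\langle y^2\rangle$ normal, so your use is legitimate, but you should say you are applying the argument rather than the lemma itself. The paper's route via \pref{quo-crit} avoids this wrinkle entirely.
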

	
	\begin{proof}
	Let $\G = \G(p_1, p_2, p_3) = \langle x_0,x_1,x_2,x_3 \rangle$, 
	and let $\overline{\G} = \G(p_1, 2, p_3) = \langle y_0, y_1, y_2, y_3 \rangle$, 
	where $y_i = \overline{x_i}$ is the image of $x_i$ under the natural epimorphism 
	$\pi: \G(p_1, p_2, p_3) \to \G(p_1, 2, p_3)$, for $0 \le i \le 3$. 
	By \pref{cox-2}, we know that $\G(p_1, 2, p_3) \cong [p_1, 2, p_3]$. 
	Also the subgroup $\langle x_0, x_1, x_2 \rangle$ of $\Gamma$ covers $[p_1, 2]$, 
	and this cover is one-to-one on $\langle x_0, x_1 \rangle$, 
	so $\langle x_0, x_1, x_2 \rangle$ is a string C-group, 
	by \pref{quo-crit}.
	A similar argument shows that $\langle x_1, x_2, x_3 \rangle$ is a string C-group.
	Now the intersection of these two string C-groups is $\langle x_1, x_2 \rangle$, since 
	the intersection of their images in $\overline{\G}$ is 
	$ \langle y_0, y_1, y_2 \rangle \cap \langle y_1, y_2,y_3 \rangle = \langle y_1, y_2 \rangle$, 
	and the kernel of $\pi$ is $\langle (x_1 x_2)^2 \rangle$.
	Hence by \pref{facet-vfig}, $\G(p_1, p_2, p_3)$ is a string C-group, and the rest 
	follows easily from \lref{tight-gp}.
	\end{proof}

	\begin{lemma}
	\label{lem:subgroup-2}
	If $p_i = 2$ for some $i$,  
	then in the group $\G(p_1, \ldots, p_{n-1}) = \langle x_0, \ldots, x_{n-1} \rangle$, 
	we have $\langle x_0, \ldots, x_i \rangle \cong \G(p_1, \ldots, p_i)$ 
	and $\langle x_{i-1}, \ldots, x_{n-1} \rangle \cong \G(p_i, \ldots, p_{n-1})$.  
	\end{lemma}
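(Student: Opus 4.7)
The plan is to produce mutually inverse homomorphisms between $\G(p_1,\dots,p_i)$ and the subgroup $\langle x_0,\dots,x_i\rangle$. The natural map $\G(p_1,\dots,p_i) \twoheadrightarrow \langle x_0,\dots,x_i\rangle$ sending each abstract generator $a_j$ to the corresponding $x_j$ is clearly a well-defined epimorphism, since every defining relation of $\G(p_1,\dots,p_i)$ is already a defining relation of $\G(p_1,\dots,p_{n-1})$. To invert it, I would define an explicit homomorphism $\pi: \G(p_1,\dots,p_{n-1}) \to \G(p_1,\dots,p_i)$ by setting $\pi(x_j) = a_j$ for $0 \le j \le i$ and $\pi(x_j) = a_i$ for $i < j \le n-1$. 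The idea is to collapse the tail generators onto the single involution $a_i$, so that any relation spanning the cut becomes a short word in $\{a_{i-1}, a_i\}$ and hence can be handled using $p_i = 2$.

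The main obstacle is verifying that $\pi$ respects every defining relation of $\G(p_1,\dots,p_{n-1})$. The involution relations and the Coxeter exponent relations go through immediately: on the tail, the image of $(x_{j-1}x_j)^{p_j}$ becomes $(a_i a_i)^{p_j} = 1$ regardless of $p_j$. Among the commutation relations $(x_j x_k)^2 = 1$ with $|j-k|\ge 2$, the only case not automatically handled by the Coxeter commutation within $\G(p_1,\dots,p_i)$ is $j = i-1$ and $k \ge i+1$, where the image is $(a_{i-1} a_i)^2$; this vanishes precisely because $(a_{i-1} a_i)^{p_i} = 1$ with $p_i = 2$ says that $a_{i-1}$ and $a_i$ commute, and this is the only place the hypothesis $p_i = 2$ gets used. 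The remaining work concentrates on the extra relators $r_j$: for $j \le i-1$ the image is exactly the corresponding relator of $\G(p_1,\dots,p_i)$; for $j \ge i+1$ the image is a word entirely in $a_i$, hence trivial; and the two boundary relators $r_{i-1}$ and $r_i$ require enumerating the applicable subcases of the three-case definition (the subcase requiring $p_i$ odd is ruled out), each of which collapses to $1$ after applying $a_{i-1} a_i = a_i a_{i-1}$.

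Once $\pi$ is confirmed to be well-defined, the composition of the natural surjection $\G(p_1,\dots,p_i) \twoheadrightarrow \langle x_0,\dots,x_i\rangle$ with $\pi$ is the identity on the generators of $\G(p_1,\dots,p_i)$ and hence is the identity map. Therefore both homomorphisms are mutually inverse isomorphisms, which proves the first assertion. The second assertion $\langle x_{i-1},\dots,x_{n-1}\rangle \cong \G(p_i,\dots,p_{n-1})$ follows by the entirely symmetric construction: define $\pi': \G(p_1,\dots,p_{n-1}) \to \G(p_i,\dots,p_{n-1})$ by sending $x_j \mapsto b_{j-i+1}$ for $j \ge i-1$ and collapsing $x_0, \dots, x_{i-2}$ onto the first generator $b_0$. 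All verifications are line-for-line analogous, again hinging on $p_i = 2$ at the unique nontrivial commutation check.
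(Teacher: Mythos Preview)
Your proof is correct and follows essentially the same strategy as the paper: exhibit a retraction $\pi\colon \G(p_1,\dots,p_{n-1})\to\G(p_1,\dots,p_i)$ that splits the obvious surjection $\G(p_1,\dots,p_i)\twoheadrightarrow\langle x_0,\dots,x_i\rangle$. The paper's version sends the tail generators $x_{i+1},\dots,x_{n-1}$ to the identity rather than to $a_i$, which makes the verification of the collapsed relators marginally shorter (for instance, $r_i$ reduces directly to $x_{i-1}^{\,2}$ or $((x_ix_{i-1})^2)^2$), but the logical structure of the two arguments is identical, and in both cases the hypothesis $p_i=2$ is invoked precisely to force $a_{i-1}$ and $a_i$ (resp.\ $x_{i-1}$ and $x_i$) to commute at the boundary.
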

	
	\begin{proof}
	First consider $\Lambda = \langle x_0, \ldots, x_i \rangle$. 
	This is obtainable as a quotient of $\G(p_1, \ldots, p_i)$ by adding extra relations.  
	But also it can be obtained from $\G(p_1, \ldots, p_{n-1})$ by killing the unwanted 
	generators $x_{i+1}, \dots , x_{n-1}$.  
	(Note that for $i \shp 1 \le j \le n \shm 1$, the relation $r_j = 1$ and all relations of 
	the form $(\rho_j \rho_k)^m = 1$ become redundant and may be removed, 
	and the same holds for the relations $r_i = 1$ and $r_{i-1} = 1$ since the 
	assumption that $p_i = 2$ implies that $[x_{i-1},x_i] = (x_{i-1},x_i)^2 =1$ and 
	hence that $x_0, \ldots, x_{i-1}$ commute with $x_i, \ldots, x_{n-1}$.)  
	It follows that $\Lambda \cong \G(p_1, \ldots, p_1)$. 
	Also $\langle x_{i-1}, \ldots, x_{n-1} \rangle \cong \G(p_i, \ldots, p_{n-1})$, 
	by the dual argument.
	\end{proof}
	
	\begin{theorem}
	If $p_1$ is odd, $p_2$ is an even divisor of $2p_1$, and $p_3$ is even, 
	then $\calP(p_1, p_2, p_3)$ is a tight orientably-regular polytope of type $\{p_1, p_2, p_3\}$.
	\end{theorem}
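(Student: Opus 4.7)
\pref{odd-even} already tells us that $\G(p_1, p_2, p_3)$ is a string C-group and that $\calP(p_1, p_2, p_3)$ is a tight orientably-regular polytope of type $\{p_1, q, p_3\}$ for some even divisor $q$ of $p_2$; the only thing left is to show that $q = p_2$. The plan is to evaluate $|\G(p_1, p_2, p_3)|$ in two different ways and compare. One evaluation is immediate from \lref{tight-gp}, giving $|\G(p_1, p_2, p_3)| = 2 p_1 q p_3$ (using that $x_0 x_1$ has order $p_1$ because $p_1$ is odd, and that $x_2 x_3$ has order $p_3$, as established in the proof of \pref{odd-even} via the cover $\langle x_1, x_2, x_3 \rangle \to [2, p_3]$ that is one-to-one on $\langle x_2, x_3 \rangle$). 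The second evaluation will yield $2 p_1 p_2 p_3$, which forces $q = p_2$.

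For the second evaluation, let $N = \langle (x_2 x_3)^2 \rangle$. Since $p_3$ is even, \pref{norm-subgp} makes $N$ normal in $\G(p_1, p_2, p_3)$, and $|N| = p_3/2$. I claim $\G(p_1, p_2, p_3)/N \cong \G(p_1, p_2, 2)$: adjoining $(x_2 x_3)^2 = 1$ renders $(x_2 x_3)^{p_3} = 1$ redundant (because $p_3$ is even), and the two extra relators $r_1 = (x_0 x_1 x_2 x_1 x_2)^2$ and $r_2 = (x_1 x_2 x_3 x_2)^2$ defining $\G(p_1, p_2, p_3)$ coincide exactly with those prescribed by the parity rule for $\G(p_1, p_2, 2)$. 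To pin down $|\G(p_1, p_2, 2)|$, I will produce a surjection $\G(p_1, p_2, 2) \to \G(p_1, p_2) \times \Z_2$ sending $x_0, x_1, x_2$ to the standard generators of $\G(p_1, p_2)$ and $x_3$ to the non-identity element of $\Z_2$; a short check confirms that all defining relations hold in the target. Combined with \tref{q-div-2p} this gives $|\G(p_1, p_2, 2)| \geq 4 p_1 p_2$, and \lref{tight-gp} supplies the matching upper bound. Hence $|\G(p_1, p_2, p_3)| = 4 p_1 p_2 \cdot p_3/2 = 2 p_1 p_2 p_3$, and equating with the first evaluation gives $q = p_2$.

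The main obstacle is the identification $\G(p_1, p_2, p_3)/N \cong \G(p_1, p_2, 2)$, which rests on the parity-driven choice of the relators $r_i$: the ``odd--even'' formula for $r_1$ and the ``even--even'' formula for $r_2$ are set up precisely so that substituting $p_3 = 2$ into the second does not disturb the first, and the two presentations match verbatim. Once this identification is in hand, tightness and orientable-regularity of $\calP(p_1, p_2, p_3)$ follow immediately, since the flag count is the required $2 p_1 p_2 p_3$ and every defining relator of $\G(p_1, p_2, p_3)$ has even length.
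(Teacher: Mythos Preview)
Your proof is correct and follows essentially the same route as the paper: both arguments pass to the quotient $\G(p_1,p_2,2)$ and exploit the copy of $\G(p_1,p_2)$ sitting inside it. The paper does this a bit more directly, invoking \lref{subgroup-2} to conclude that $\langle y_0,y_1,y_2\rangle \cong \G(p_1,p_2)$ in $\G(p_1,p_2,2)$ and hence that $y_1y_2$ (and therefore $x_1x_2$) has order $p_2$; you instead recover the same fact by an order count, building the surjection $\G(p_1,p_2,2)\to\G(p_1,p_2)\times\Z_2$ by hand (which is exactly the special case of \lref{subgroup-2} needed here) and then comparing $2p_1qp_3$ with $2p_1p_2p_3$.
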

	
	\begin{proof}
	First, the group $\G(p_1, p_2, p_3) = \langle x_0,x_1,x_2,x_3 \rangle$ 
	covers $\G(p_1, p_2, 2) = \langle y_0, y_1, y_2, y_3 \rangle$, say, since $p_3$ is even. 
	Also by \lref{subgroup-2} we know that $\langle y_0,y_1,y_2 \rangle$ is isomorphic 
	to $\G(p_1, p_2)$, and hence in particular, $y_1 y_2$ has order $p_2$. 
	It follows that the order of $x_1 x_2$ is also $p_2$, and then the conclusion follows 
	from \pref{odd-even}. 
	\end{proof}
	
	\begin{theorem}
	If $p_1$ and $p_3$ are odd, and $p_2$ is an even divisor of both $2p_1$ and $2p_3$, 
	then $\calP(p_1, p_2, p_3)$ is a tight orientably-regular polytope of type $\{p_1, p_2, p_3\}$.
	\end{theorem}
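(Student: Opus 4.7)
The plan is to invoke \pref{odd-even} directly, which already gives that $\G(p_1, p_2, p_3)$ is a string C-group and that $\calP(p_1, p_2, p_3)$ is a tight orientably-regular polytope of type $\{p_1, q, p_3\}$ for some even divisor $q$ of $p_2$; the task then reduces to proving that $q = p_2$, equivalently that $x_1 x_2$ has order exactly $p_2$ in $\G = \G(p_1, p_2, p_3)$.  By \pref{norm-subgp}, the element $\omega = (x_1 x_2)^2$ generates a cyclic normal subgroup $N$ of $\G$, and by \pref{cox-2} the quotient $\G/N \cong \G(p_1, 2, p_3) \cong [p_1, 2, p_3]$ has order $4 p_1 p_3$; thus $|\G| = 4 p_1 p_3 \cdot |\omega|$, and the claim reduces to showing that $|\omega| = p_2/2$.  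Note that the admissibility hypotheses force $p_2/2$ to be odd (since $p_1, p_3$ are odd, yet $p_2/2$ divides both $p_1$ and $p_3$).

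To produce the required lower bound $|\omega| \geq p_2/2$, the plan is to exhibit an epimorphism from $\G$ onto a concrete group $H$ of order $2 p_1 p_2 p_3$ in which the image of $x_1 x_2$ has order $p_2$.  Using the tight orientably-regular polyhedra $\calP(p_1, p_2)$ and $\calP(p_2, p_3)$ guaranteed by \tref{q-div-2p}, together with their natural projections onto the common dihedral subgroup $D_{p_2} = \langle x_1, x_2 \rangle$ (constructed as in the proof of \tref{q-div-2p}), I would form the fibered product $H = \G(p_1, p_2) \times_{D_{p_2}} \G(p_2, p_3)$, which has order $(2 p_1 p_2)(2 p_2 p_3)/(2 p_2) = 2 p_1 p_2 p_3$, and take diagonal lifts of $x_1, x_2$; the lifts of $x_0$ and $x_3$ must then be chosen within $H$, using the kernel freedom of the projections to $D_{p_2}$, so as to respect the relations $r_1$, $r_2$, and the commutation $(x_0 x_3)^2 = 1$.

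The hard part will be verifying the commutation $(x_0 x_3)^2 = 1$ in $H$: the naive lifts (taking $x_0$ and $x_3$ inside the embedded copies of $D_{p_2}$ in each factor) fail this relation by a non-trivial element of $D_{p_2}$, so the lifts must be twisted by suitable elements of the kernels of the projections to $D_{p_2}$.  The admissibility conditions $p_2/2 \mid p_1$ and $p_2/2 \mid p_3$ provide exactly the flexibility needed to make this adjustment, since those kernels are cyclic of orders $p_1$ and $p_3$ respectively and contain the powers needed to cancel the obstruction.  Once the epimorphism $\G \twoheadrightarrow H$ is in place, the image of $x_1 x_2$ has order $p_2$, so $|\omega| \geq p_2/2$, giving equality with the upper bound and hence $|\G| = 2 p_1 p_2 p_3$ and $q = p_2$; orientable regularity of $\calP(p_1, p_2, p_3)$ then follows from the even length of all defining relators of $\G(p_1, p_2, p_3)$.
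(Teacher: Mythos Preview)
Your reduction is exactly right and matches the paper: invoke \pref{odd-even} to get a tight orientably-regular polytope of type $\{p_1,q,p_3\}$ with $q\mid p_2$, and then the whole problem is to prove that $x_1x_2$ already has order $p_2$ in $\G(p_1,p_2,p_3)$.

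Where you diverge from the paper is in how you witness $|x_1x_2|=p_2$. The paper does not attempt to build a quotient of the full reflection group $\G$. Instead it passes to the rotation subgroup $\G^+(p_1,p_2,p_3)=\langle y_1,y_2,y_3\rangle$, writes down a short presentation for it (using that $y_2^{\,2}$ is central), and then exhibits a completely explicit permutation representation on $\Z_{p_1}\times\Z_{p_2}$ in which the image $\pi_2$ of $y_2$ acts by $(j,k)\mapsto(j,k+1)$ and so visibly has order $p_2$. All the relations are checked by hand in a few lines; there is nothing to twist or adjust.

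Your fibered-product route is conceptually appealing, but as written it has a genuine gap precisely at the step you flag as ``the hard part''. You assert that the obstruction to $(x_0x_3)^2=1$ in $H=\G(p_1,p_2)\times_{D_{p_2}}\G(p_2,p_3)$ can be cancelled by twisting the lifts of $x_0$ and $x_3$ by kernel elements, but you never compute the obstruction, never identify which kernel elements do the job, and never check that such a twist preserves the \emph{other} relations you need in the second (resp.\ first) factor: the involution condition on the lift, the relation $r_1$ (resp.\ $r_2$), and commutation with $x_2$ (resp.\ $x_1$). The slogan that ``the kernels are cyclic of orders $p_1$ and $p_3$ and contain the powers needed'' is not a proof; the kernels are normal subgroups of non-abelian groups, and a twist that fixes one relation can break another. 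Until this computation is actually carried out, you have not produced a homomorphism $\G\to H$, and hence no lower bound on $|x_1x_2|$. The paper's device of working in $\G^+$ and writing down explicit permutations avoids this difficulty altogether, at the cost of a small amount of bookkeeping with the relations $(y_1y_2)^2$, $(y_2y_3)^2$, $(y_1y_2y_3)^2$, $[y_1,y_2^{\,2}]$, $[y_3,y_2^{\,2}]$.
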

	
	\begin{proof}
	Under the given assumptions, the group $\G(p_1,p_2,p_3)$ is obtained from the 
	string Coxeter group $[p_1,p_2,p_3] = \langle x_0,x_1,x_2,x_3 \rangle$ by adding 
	the two extra relations $(x_0 x_1 x_2  x_1 x_2)^2 = 1$ and $(x_3 x_2  x_1 x_2 x_1)^2 = 1$. 
	These imply that the element $\omega = (x_1 x_2)^2$ is inverted under 
	conjugation by each of $x_0$ and $x_3$, and hence by all the $x_i$. 
	Now let $y_i = x_{i-1} x_i$ for $1 \le i \le 3$.  These elements generate the 
	orientation-preserving subgroup $\G^+(p_1, p_2, p_3)$, and they all centralise 
	$\omega = y_2^{\, 2}$.  It follows that $\G^+(p_1, p_2, p_3)$ has presentation 
	\[ \gp{y_1, y_2, y_3}{y_1^{\, p_1}, y_2^{\, p_2}, y_3^{\, p_3}, (y_1 y_2)^2, (y_2 y_3)^2,
	(y_1 y_2 y_3)^2, [y_1, y_2^{\, 2}], [y_3, y_2^{\, 2}]}. \]
	
	We now exhibit a permutation representation of this group on the set $\Z_{p_1} \times \Z_{p_2}$, 
	by letting each $y_i$ induce the permutation $\pi_i$, where 
	\[
	(j,k)^{\pi_2} = (j,k \shp 1) \hbox{  for all } (j,k), 
	\]
	\[
	(j,k)^{\pi_1} = \begin{cases} 
	  (j \shp 1,k) & \hbox{for } k \hbox{ even } \\  
	  (j \shm 1,k \shm 2) & \hbox{for } k \hbox{ odd, } \end{cases}
	  \qquad 
	(j,k)^{\pi_3} = \begin{cases} 
	  (j,k \shm 2j) & \hbox{for } k \hbox{ even } \\  
	  (j,k \shp 2(j \shm 1)) & \hbox{for } k \hbox{ odd. } \end{cases}
	\]
	It is easy to see that $\pi_1$ and $\pi_2$ have orders $p_1$ and $p_2$ respectively
	(since $p_2$ divides $2p_1$), and that the order of $\pi_3$ divides $p_2/2$ 
	and hence divides $p_3$. 
	It is also easy to verify that they satisfy the other defining relations 
	for $\G^+(p_1, p_2, p_3)$, and thus we do have a permutation representation. 
	
	In particular, since $\pi_2$ has order $p_2$, so does $y_2 = x_1 x_2$, 
	and again the conclusion follows from \pref{odd-even}. 
	\end{proof}

	We now handle the remaining case.
	
	\begin{theorem}
	\label{prop:oeeo}
	If $p_1$ and $p_4$ are odd, $p_2$ is an even divisor of $p_1$, and $p_3$ an even divisor of $p_4$, 
	then $\calP(p_1, p_2, p_3, p_4)$ is a tight orientably-regular polytope of type $\{p_1, \ldots, p_4\}$.
	\end{theorem}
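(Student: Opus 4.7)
The plan is to show that $\G := \G(p_1, p_2, p_3, p_4)$ is a string C-group of order $2p_1p_2p_3p_4$; then the construction in Section~2.2 produces $\calP(p_1, p_2, p_3, p_4)$ as a tight polytope of type $\{p_1, p_2, p_3, p_4\}$, and since every defining relator of $\G$ has even length it is orientably-regular.

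My main tool is a pair of retraction homomorphisms $\phi : \G \to \G(p_1, p_2)$ (sending $x_3, x_4 \mapsto \eps$) and $\psi : \G \to \G(p_3, p_4)$ (sending $x_0, x_1 \mapsto \eps$); both are well-defined, since $(x_2 x_3)^{p_3}$ collapses to $x_2^{p_3} = 1$ using $p_3$ even (dually for $\psi$), and the relators $r_2, r_3$ each reduce to trivial words in the target. These immediately give $\langle x_0, x_1, x_2\rangle \cong \G(p_1, p_2)$ of order $2p_1 p_2$ and $\langle x_2, x_3, x_4\rangle \cong \G(p_3, p_4)$ of order $2p_3 p_4$, by \tref{q-div-2p}. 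I then bootstrap: since $\phi$ is injective on $\langle x_0, x_1, x_2\rangle$ and $\phi(\langle x_2, x_3\rangle) = \langle x_2\rangle$, short preimage arguments give the intersections $\langle x_1, x_2\rangle \cap \langle x_2, x_3\rangle = \langle x_2\rangle$ and $\langle x_0, x_1, x_2\rangle \cap \langle x_1, x_2, x_3\rangle = \langle x_1, x_2\rangle$. Counting product sets yields $|\langle x_1, x_2, x_3\rangle| \ge 2p_2 p_3$ and $|A| \ge 2p_1 p_2 p_3$, where $A := \langle x_0, x_1, x_2, x_3\rangle$; combining with the natural surjections $\G(p_2, p_3) \twoheadrightarrow \langle x_1, x_2, x_3\rangle$ and $\G(p_1, p_2, p_3) \twoheadrightarrow A$, we find $\langle x_1, x_2, x_3\rangle \cong \G(p_2, p_3)$ and $A \cong \G(p_1, p_2, p_3)$. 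Symmetrically $B := \langle x_1, x_2, x_3, x_4\rangle \cong \G(p_2, p_3, p_4)$, and both $A$ and $B$ are string C-groups by the previous theorems.

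Finally, $\phi(A \cap B) \subseteq \phi(A) \cap \phi(B) = \G(p_1,p_2) \cap \langle x_1, x_2\rangle = \langle x_1, x_2\rangle$, while $\ker(\phi|_A)$ has order $|A|/(2p_1p_2) = p_3$, so $|A \cap B| \le 2p_2 \cdot p_3 = |\langle x_1, x_2, x_3\rangle|$; since $\langle x_1, x_2, x_3\rangle \subseteq A \cap B$, equality holds and $A \cap B = \langle x_1, x_2, x_3\rangle$. Then $|AB| = |A|\,|B|/|A \cap B| = 2p_1 p_2 p_3 p_4$, and with the upper bound $|\G| \le 2p_1 p_2 p_3 p_4$ from \lref{tight-gp} this forces $|\G| = 2p_1 p_2 p_3 p_4$; \pref{facet-vfig} then concludes that $\G$ is a string C-group. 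The most delicate point is the bootstrapping in the second paragraph, where one must verify at each level that no extra relations sneak in among the relevant generators of $\G$ beyond those defining the corresponding smaller polytope group --- this is precisely what the retractions $\phi$ and $\psi$ ensure, by collapsing the ``distant'' relators to trivial ones.
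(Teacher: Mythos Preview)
Your proof is correct, and follows the same overall skeleton as the paper's argument: establish that $A=\langle x_0,\dots,x_3\rangle$ and $B=\langle x_1,\dots,x_4\rangle$ are string C-groups, show $A\cap B=\langle x_1,x_2,x_3\rangle$, and apply \pref{facet-vfig}. The technical route, however, is genuinely different. The paper works with the ``partial'' quotients $\G(p_1,p_2,2,p_4)$ and $\G(p_1,2,2,p_4)$, invoking \lref{subgroup-2} to identify subgroups inside those (where some $p_i=2$ forces the generators to split into commuting blocks), and then reads off $A\cap B$ from the explicit kernel $\langle (x_1x_2)^2,(x_2x_3)^2\rangle\subseteq\langle x_1,x_2,x_3\rangle$ of the map to $\G(p_1,2,2,p_4)$. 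You instead use the full retractions $\phi,\psi$ that kill pairs of generators outright, and replace the kernel analysis by product-set counting: once $\phi$ pins down $\langle x_0,x_1,x_2\rangle$ and $\psi$ pins down $\langle x_2,x_3,x_4\rangle$, the orders of $\langle x_1,x_2,x_3\rangle$, $A$, $B$, and $A\cap B$ all fall out from $|XY|=|X||Y|/|X\cap Y|$ together with the upper bounds coming from the surjections from the smaller $\G(\cdots)$ groups. Your approach avoids \lref{subgroup-2} entirely and makes the single map $\phi$ do double duty (identifying $\langle x_0,x_1,x_2\rangle$ and later bounding $|A\cap B|$); the paper's approach is more modular, reusing the $p_i=2$ machinery already set up. Both rely on the earlier rank-$4$ theorem for the odd--even--even case to certify that $A$ (and dually $B$) is a string C-group.
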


	\begin{proof}
	Take $\G = \G(p_1, \ldots, p_4) = \langle x_0, \ldots, x_4 \rangle$, 
	and $\Lambda = \G(p_1, p_2, 2, p_4) = \langle y_0, \ldots, y_4 \rangle$. 
	Then $\G$ covers $\Lambda$, and this induces a cover
	from $\langle x_0, \ldots, x_3 \rangle$ to $\langle y_0, \ldots, y_3 \rangle$, 
	which is isomorphic to $\G(p_1, p_2, 2)$ by \lref{subgroup-2}. 
	Similarly we have a cover from $\langle x_0, x_1, x_ 2\rangle$ to $\langle y_0, y_1, y_2 \rangle$, 
	which is isomorphic to $\G(p_1, p_2)$. 
	But on the other hand, $\langle x_0, x_1, x_2 \rangle$ is a quotient of $\G(p_1, p_2)$, 
	and hence these two groups are isomorphic. 
	In particular, the cover from $\langle x_0, \ldots, x_3 \rangle$ to $\G(p_1, p_2, 2)$ 
	is one-to-one on the facets, so $\langle x_0, \ldots, x_3 \rangle$ is a string C-group. 
	By a dual argument, $\langle x_1, \ldots, x_4 \rangle$ is also a string C-group.
	
	Next, let $\Delta = \G(p_1, 2, 2, p_4) = \langle z_0, \ldots, z_4 \rangle$, 
	and let $\pi$ be the covering homomorphism from $\Gamma$ to $\Delta$. 
	The kernel of $\pi$ is the subgroup generated by $(x_1 x_2)^2$ and $(x_2 x_3)^2$,
	since the defining relations for $\G = \G(p_1, \ldots, p_4)$ imply that these two 
	elements are centralised or inverted under conjugation by each generator $x_i$. 
	In particular, $\ker\pi \subseteq \langle x_1,x_2,x_3 \rangle$. 
	As also the intersection of  $\langle z_0, \ldots, z_3 \rangle$ and  $\langle z_1, \ldots, z_4 \rangle$ 
	in $\Delta$ is $\langle z_1, z_2, z_3 \rangle$, 
	it follows that intersection of  $\langle x_0, \ldots, x_3 \rangle$ and  $\langle x_1, \ldots, x_4 \rangle$
	is $\langle x_1, x_2, x_3 \rangle$. 
	Hence $\G$ is a string C-group.

	Now $\langle x_0, x_1, x_2 \rangle \cong \G(p_1, p_2)$, 
	and by \tref{q-div-2p} we know the polytope $\calP(p_1, p_2)$ has type $\{p_1, p_2\}$. 
	Similarly $\langle x_2, x_3, x_4 \rangle \cong \G(p_3, p_4)$, and $\calP(p_3, p_4)$ has type
	$\{p_3, p_4\}$.
	It follows that $\calP(p_1, p_2, p_3, p_4)$ is an orientably-regular polytope of type $\{p_1, \ldots, p_4\}$.
	In particular, the order of $x_{i-1} x_i$ is $p_i$ (for $1 \le i \le 4$),  
	and so by \lref{tight-gp}, $\calP(p_1, p_2, p_3, p_4)$ is tight.
	\end{proof}

	This gives us all the building blocks we need.  With the help of \tref{tight-existence}, 
	we now know that $\calP(p_1, \ldots, p_{n-1})$ is a tight orientably-regular polytope 
	of type $\{p_1, \ldots, p_{n-1}\}$ whenever $(p_1, \ldots, p_{n-1})$ is admissible, 
	and the proof of \tref{tight-existence2} is complete.

\section{Tight non-orientably-regular polytopes}

	We have not yet been able to completely characterise the Schl\"afli symbols of tight, 
	non-orientably-regular polytopes, but we have made some partial progress. 
	For example, we can easily find an infinite family of tight, non-orientably-regular polyhedra.
	
	\begin{theorem}
	\label{thm:3k-4}
	For every odd positive integer $k$, there exists a non-orientably-regular tight
	polyhedron of type $\{3k, 4\}$, with automorphism group
	\[ \Lambda(k) = \langle\, \rho_0, \rho_1, \rho_2 \mid \rho_0^{\,2}, \rho_1^{\,2}, \rho_2^{\,2}, 
	(\rho_0 \rho_1)^{3k}, (\rho_0 \rho_2)^2, (\rho_1 \rho_2)^4,
	\rho_0 \rho_1 \rho_2 \rho_1 \rho_0 \rho_1 \rho_2 \rho_1 \rho_2  \, \rangle. \]
	\end{theorem}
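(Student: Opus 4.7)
The theorem makes three claims about $\Lambda(k)$: that it has order exactly $24k$, is a string C-group, and defines a non-orientably regular polyhedron.  My plan addresses these in three stages, with a concluding remark on the main obstacle.

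Non-orientability is immediate.  Rewrite the length-$9$ relator as $(\rho_0\rho_1\rho_2\rho_1)^2 = \rho_2$; setting $\sigma_1 = \rho_0\rho_1$ and $\sigma_2 = \rho_1\rho_2$, this reads $(\sigma_1\sigma_2^{-1})^2 = \rho_2$.  The left side lies in the rotation subgroup $\Lambda(k)^+$, so $\rho_2$ does too, and then $\rho_1 = \rho_2\sigma_2^{-1}$ and $\rho_0 = \sigma_1\rho_1$ also lie in $\Lambda(k)^+$, forcing $\Lambda(k) = \Lambda(k)^+$.  Hence the polyhedron is non-orientably regular once its existence is confirmed.

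For the order I would combine an upper bound with an explicit construction.  Upper bound:  show $\Lambda(k) = \langle\rho_0,\rho_1\rangle \cdot \langle\rho_1,\rho_2\rangle$, where the factors are dihedral of orders at most $6k$ and $8$ with intersection $\langle\rho_1\rangle$, giving $|\Lambda(k)| \le 24k$.  The key check is closure of the product set under left multiplication by $\rho_2$, which follows from $(\rho_0\rho_2)^2 = 1$ (so $\rho_2$ commutes past $\rho_0$) together with the extra relator, allowing $\rho_2\sigma_1^{\,i}$ to be rewritten in the desired form.  For the matching lower bound, I would exhibit an explicit model of order $24k$.  The case $k=1$ is clean:  taking $\bar\rho_0 = (3\,4)$, $\bar\rho_1 = (2\,3)$, $\bar\rho_2 = (1\,2)(3\,4)$ in $S_4$, one computes $\bar\sigma_1 = (2\,4\,3)$ of order $3$, $\bar\sigma_2 = (1\,3\,4\,2)$ of order $4$, and directly verifies the length-$9$ relator, giving $\Lambda(1) \cong S_4$.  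For general odd $k$, I would lift this to an extension of $S_4$ by a cyclic group $C_k$ corresponding to $\langle\sigma_1^{\,3}\rangle$, with action determined by $\rho_0\sigma_1^{\,3}\rho_0 = \rho_1\sigma_1^{\,3}\rho_1 = \sigma_1^{\,-3}$ and $\rho_2\sigma_1^{\,3}\rho_2 = (\sigma_1\sigma_2^{\,2})^3$, the last formula following from $\rho_2\sigma_1\rho_2 = \sigma_1\sigma_2^{\,2}$ (itself a consequence of the dihedral identity $\rho_2\rho_1\rho_2 = \sigma_2^{\,2}\rho_1$ inside $\langle\rho_1,\rho_2\rangle$).

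The string C-group property then follows from \pref{quo-crit} applied to the surjection $\pi : \Lambda(k) \twoheadrightarrow S_4$ obtained by imposing $\sigma_1^{\,3} = 1$.  The image of $\langle\rho_1,\rho_2\rangle$ in $S_4$ is the order-$8$ dihedral subgroup $\langle(2\,3),(1\,2)(3\,4)\rangle$, so $\pi$ restricts to a surjection between groups of order $8$, hence a bijection; this supplies the intersection condition.  Thus $\Lambda(k) = \G(\calP)$ for a regular polyhedron $\calP$ which is tight of type $\{3k,4\}$ by the order count and non-orientably regular by the first stage.  The main obstacle is the explicit extension construction for general odd $k$:  verifying that the prescribed action of $\rho_2$ on $\sigma_1^{\,3}$ (which is the nontrivial twist $\sigma_1^{\,-3}$ times a contribution from $\sigma_2^{\,2}$, not simply inversion) is consistent and yields a group of order exactly $24k$ rather than collapsing when $3 \mid k$ is delicate.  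A viable alternative would be to describe the polyhedron combinatorially as four $3k$-gonal faces glued along a shared $3k$-element vertex set and then recover $\Lambda(k)$ as its full automorphism group.
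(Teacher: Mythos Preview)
Your non-orientability argument and your use of the quotient to $S_4$ for the string C-group property are correct and coincide with the paper's approach: $S_4$ is the automorphism group of the hemi-octahedron, which is exactly $\Lambda(1)$, and the paper applies \pref{quo-crit} to the covering $\Lambda(k) \to \Lambda(1)$, injective on $\langle\rho_1,\rho_2\rangle$, just as you do.

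The genuine gap is precisely the one you flag yourself: the lower bound $|\Lambda(k)| \ge 24k$ via an explicit extension of $S_4$ by $C_k$. You are right that the conjugate $\rho_2\,\sigma_1^{\,3}\rho_2$ is not simply $\sigma_1^{\,-3}$, and checking consistency of the resulting cocycle is genuinely awkward. The paper bypasses this entirely. Rather than building a model, it observes that $N = \langle \rho_2,\ \rho_1\rho_2\rho_1 \rangle$ is a \emph{normal} Klein $4$-subgroup: $\rho_1$ and $\rho_2$ visibly normalise it, and the length-$9$ relator rearranges to $\rho_0(\rho_1\rho_2\rho_1)\rho_0 = \rho_1\rho_2\rho_1\rho_2 \in N$, so $\rho_0$ normalises it too. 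The quotient $\Lambda(k)/N$ is then presented by $\langle \rho_0,\rho_1 \mid \rho_0^{\,2},\ \rho_1^{\,2},\ (\rho_0\rho_1)^{3k} \rangle$ (the extra relator collapses), which is dihedral of order \emph{exactly} $6k$. This single step yields both the upper bound $|\Lambda(k)| \le 4 \cdot 6k = 24k$ and the fact that $\rho_0\rho_1$ has order $3k$; combined with the order $4$ of $\rho_1\rho_2$ already obtained from the cover to $S_4$, one gets $|N|=4$ and hence $|\Lambda(k)| = 24k$, with the polyhedron tight of type $\{3k,4\}$.

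Note also that your product decomposition $\Lambda(k) = \langle\rho_0,\rho_1\rangle \cdot \langle\rho_1,\rho_2\rangle$ is an immediate corollary of the normality of $N$ (since $N \subseteq \langle\rho_1,\rho_2\rangle$ and $\Lambda(k)/N$ is generated by the images of $\rho_0,\rho_1$), so your upper-bound sketch and the paper's are really the same computation in disguise; the point is that phrasing it via $N$ delivers the lower bound for free, eliminating the obstacle you identified.
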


	\begin{proof}
	We note that $\Lambda(1)$ is the automorphism group of the hemi-octahedron 
	(of type $\{3,4\}$), and that $\Lambda(k)$ covers $\Lambda(1)$, for every $k$. 
	Hence in each $\Lambda(k)$, the order of $\rho_1 \rho_2$ is $4$ (and not $1$ or $2$). 
	Next, because the covering is one-to-one on the vertex-figures, 
	it follows from \pref{quo-crit} that $\Lambda(k)$ is a string C-group.
	Also $\Lambda(k)$ has a relation of odd length, 
	and so it must be the automorphism group of a non-orientably-regular polyhedron.
	
	Now let $N$ be the subgroup generated by the involutions $\rho_2$ and $\rho_1 \rho_2 \rho_1$. 
	Since their product has order $2$, this is a Klein $4$-group.  Moreover, $N$ is 
	normalised by $\rho_1$ and $\rho_2$, and also by $\rho_0$ since $\rho_0$ centralises 
	$\rho_2$ and the final relation in the definition of $\Lambda(k)$ gives 
	$(\rho_1 \rho_2 \rho_1)^{\rho_0} = \rho_1 \rho_2 \rho_1 \rho_2$. 
	It is now easy to see that $N$ is the normal closure of $\langle \rho_2 \rangle$. 
	The quotient $\Lambda(k)/N$ is isomorphic to 
	$\langle\, \rho_0, \rho_1 \, | \, \rho_0^{\,2}, \rho_1^{\,2}, (\rho_0 \rho_1)^{3k}  \, \rangle$, 
	with the final relator for $\Lambda(k)$ becoming trivial, 
	and so $\Lambda(k)/$ is dihedral of order $6k$. 
	In particular, this shows that $\rho_0 \rho_1$ has order $3k$ (and 
	that $|\Lambda(k)| = |\Lambda(k)/N||N| = 24k$). 
	\end{proof}

	The computational data that we have on polytopes with up to 2000 flags (obtained with the 
	help of {\sc Magma} \cite{magma}) suggests that these polyhedra are
	the only tight non-orientably-regular polyhedra of type $\{p, q\}$ with $p$ odd.
	
	Using \tref{3k-4}, it is possible to build tight, non-orientably-regular polytopes in much the
	same as we did in \tref{tight-existence}. In particular, the regular polytope with automorphism
	group $\Lambda(k)$ has the FAP with respect to its 2-faces, and its dual has the FAP with respect
	to its co-0-faces. Then by \tref{fap}, we know there are tight, non-orientably-regular
	polytopes of type $\{4, 3k, 4\}$ for each odd $k$, and of type $\{4, 3k, r\}$ for each odd
	$k$ and each even $r$ dividing $3k$. It is possible to continue in this fashion,
	building tight non-orientably-regular polytopes of every rank.
	
	Finally, just as with orientably-regular polytopes, there are some kinds of Schl\"afli symbol 
	for which no examples be constructed using \tref{fap}. 
	In fact (and in contrast with the situation for orientably-regular polytopes), 
	there seem to be no tight non-orientably-regular polytopes of some of these types at all. 
	For example, there are no tight regular polytopes of type $\{3, 4, r\}$ with $r \geq 3$ 
	and with 2000 flags or fewer, but the reason for this is not clear.

\bibliographystyle{amsplain}
\bibliography{gabe}

\end{document}